\documentclass[11pt, oneside, dvipsnames, svgnames, table, final]{amsart}

\usepackage[T1]{fontenc}
\usepackage[utf8]{inputenc}

\usepackage{fourier}

\frenchspacing

\usepackage[backref=true,style=alphabetic,citestyle=alphabetic,url=false]{biblatex}
\usepackage[]{tbjwHeader}
\usepackage[]{spectralsequences}

\newcommand{\rM}{\mathrm{M}}

\addbibresource{BenW_Standard_BibTeX.bib}
\addbibresource{Article.bib}

\newcommand{\sebg}[2][]{\ifdraft{\todo[linecolor=Red,backgroundcolor=Red!25,bordercolor=Red,#1]{#2---Seb G.}}{}}
\newcommand{\benw}[2][]{\ifdraft{\todo[linecolor=Green,backgroundcolor=Green!25,bordercolor=Green,#1]{#2---Ben W.}}{}}

\begin{document}   

\title{Spaces of generators for the $2 \times 2$ complex matrix algebra}

\author{W. S. Gant} 
\address{W. S. Gant. Department of Mathematics, University of British Columbia, Vancouver~BC V6T~1Z2, Canada.}
\email{wsgant@math.ubc.ca}

\author{Ben Williams} \thanks{We acknowledge the support of the Natural Sciences and Engineering Research Council of
  Canada (NSERC), RGPIN-2016-03780, RGPIN-2021-02603. \\ Cette recherche a \'et\'e financ\'ee par le Conseil de recherches en sciences naturelles et en g\'enie du Canada (CRSNG),  RGPIN-2016-03780, RGPIN-2021-02603.}
\address{Ben Williams. Department of Mathematics, University of British Columbia, Vancouver~BC V6T~1Z2, Canada.}
\email{tbjw@math.ubc.ca}

\subjclass{14F25 (Primary); 15A99, 16S15, 55R40 (Secondary).}

\begin{abstract}
  This paper studies $B(r)$, the space of $r$-tuples of $2 \times 2$ complex matrices that generate
  $\Mat_{2 \times 2}(\C)$ as an algebra, considered up to change-of-basis. We show that $B(2)$ is homotopy equivalent to
  $S^1 \times^{\Z/2\Z} S^2$. For $r>2$, we determine the rational cohomology of $B(r)$ for degrees less than $4r-6$. As
  an application, we use the machinery of \cite{First2022} to prove that for all natural numbers $d$, there exists a
  ring $R$ of Krull dimension $d$ and a degree-$2$ Azumaya algebra $A$ over $R$ that cannot be generated by fewer than
  $2\lfloor d/4 \rfloor + 2$ elements.
\end{abstract}
\maketitle


\section{Introduction} \label{sec:Introduction}

Throughout, $r \ge 2$ denotes a natural number. This paper is an investigation of the topology of two families of
spaces, denoted $U(r)$ and $B(r)$ below, related to generating $r$-tuples for the $2 \times 2$ matrix algebra.

Let $U(r)$ denote the space of $r$-tuples of $2 \times 2$ matrices
that generate $\Mat_{2 \times 2}(\C)$ as an algebra. By a theorem of Burnside
(\cite{Lomonosov2004}) this is the set of $r$-tuples $(A_1, \dots, A_r)$ that do not have an
eigenvector in common.

The group $\PGL(2; \C)$ acts on $\Mat_{2 \times 2}(\C)$ by conjugation, in other words, by change of basis. It acts on
$U(r)$ by simultaneous conjugation. This action is free, see Proposition \ref{pr:free}, and
the quotient $B(r)$ is a manifold, see Proposition \ref{pr:quotientIsPrinBundle}.

The spaces $B(r)$ are approximations to the classifying space $B \PGL(2)$, in the pattern of 
\cite{Totaro1999} or \cite{Edidin1998}.  The map $U(r) \to B(r)$ is a
principal $\PGL(2;\C)$-bundle and is therefore classified by a map $B(r) \to B\PGL(2;\C)$, unique up to homotopy. The
classifying map is topologically $(2r-3)$-connected (Corollary \ref{cor:connectivityOfClassMap}), so that in particular, the induced map on
cohomology $\Hoh^i (B \PGL(2;\C); \Q) \to \Hoh^i(B(r); \Q)$ is an isomorphism in the range $i \le 2r-4$. This paper
examines the invariants of $B(r)$ beyond this range.

Three of our four main results are Proposition \ref{pr:B2type}, which determines the homotopy type of $B(2)$ completely, and
Propositions \ref{pr:cohoBOdd} and \ref{pr:cohoBEven}, which calculate the rational cohomology $\Hoh^i(B(r); \Q)$ when $i
< 4r-6$.

\subsection*{Applications to generation of algebras}

Our last main result is to answer a question left open by \cite{First2022}, which we now explain.

Over a commutative ring $R$, an \textit{Azumaya algebra of degree $2$} is a unital, associative but not commutative $R$-algebra
$A$ such that there exists a faithfully flat extension $\phi: R \to S$ such that $A \tensor_R S$ is isomorphic to
$\Mat_{2 \times 2}(S)$ as an $S$-algebra. Associated to such an algebra $A$ over a ring $R$, there is an algebraic principal $\PGL(2)$-bundle $T \to\Spec
R$ given locally on $\Spec R$ as the bundle of isomorphisms $A|_U \to \Mat_{2 \times 2} \times U$. As a special case of \cite[Proposition 4.1]{First2022} applied to $\Mat_{2 \times 2}$, there is a natural
bijective correspondence:
\begin{equation}
  \left\{\parbox{6.6cm}{$\PGL(2)$-equivariant morphisms $T \to
      U(r)$}\right\} \leftrightarrow \left\{\parbox{9cm}{$r$-tuples of elements of $A$ that generate
      $A$ as an algebra.}\right\}.\label{eq:4}
\end{equation}

If $A$ and $A'$ are two algebras over $R$ and $(a_1, \dots, a_r) \in A^r$ and $(a'_1, \dots, a_r') \in (A')^r$ are
$r$-tuples of elements, then we will say that an \emph{isomorphism} $\phi: (A; a_1, \dots, a_r) \to (A'; a_1', \dots,
a_r')$ is an isomorphism $\phi : A \to A'$ so that $\phi(a_i) = a_i'$ for all $i \in \{1, \dots, r\}$.

By passing to quotients, we arrive at:
  \begin{proposition}
    With notation as above, there exists a bijective correspondence
    \begin{equation}
  \left\{\parbox{4.2cm}{Morphisms $\Spec R \to
      B(r)$}\right\} \leftrightarrow \left\{\parbox{9cm}{Isomorphism classes of Azumaya algebras of degree $2$, equipped
      with $r$-tuples of elements of $A$ that generate
      $A$ as an algebra.}\right\}.\label{eq:8}
\end{equation}
\end{proposition}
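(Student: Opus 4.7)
The plan is to combine the correspondence \eqref{eq:4}, which is stated for a single Azumaya algebra with its associated torsor, with the descent-theoretic dictionary between $\PGL(2)$-torsors and degree-$2$ Azumaya algebras, and then to pass to quotients by the $\PGL(2)$-action.

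First, since $U(r) \to B(r)$ is a principal $\PGL(2)$-bundle (Proposition \ref{pr:quotientIsPrinBundle}), a morphism $f : \Spec R \to B(r)$ is the same datum, up to canonical isomorphism, as a pair $(T, \tilde f)$ consisting of a $\PGL(2)$-torsor $T \to \Spec R$ (namely the pullback $\Spec R \times_{B(r)} U(r)$) together with a $\PGL(2)$-equivariant morphism $\tilde f : T \to U(r)$. Two such pairs yield the same morphism $f$ precisely when they are related by an isomorphism of torsors intertwining the maps to $U(r)$.

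Next, by fpqc descent, isomorphism classes of $\PGL(2)$-torsors over $\Spec R$ correspond bijectively to isomorphism classes of degree-$2$ Azumaya algebras $A$ over $R$: one recovers $A$ as the twisted form $T \times^{\PGL(2)} \Mat_{2 \times 2}(\C)$ for the conjugation action, and $T$ as the sheaf of $R$-algebra isomorphisms $A \to \Mat_{2 \times 2}$. Under this dictionary, the equivariant morphism $\tilde f$ corresponds, via \eqref{eq:4}, to an $r$-tuple $(a_1, \ldots, a_r)$ of generators of $A$.

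The one substantive verification, and the main (though essentially routine) obstacle, is a naturality check: isomorphisms of pairs $(T, \tilde f) \cong (T', \tilde f')$ must be transported exactly to isomorphisms $(A; a_1, \ldots, a_r) \cong (A'; a_1', \ldots, a_r')$ in the sense of the paragraph preceding the proposition. This reduces to the observation that both the torsor-to-algebra construction and the bijection \eqref{eq:4} can be described on the same trivializing cover of $\Spec R$, so isomorphisms on one side transport canonically to the other. Composing the three bijective correspondences then yields \eqref{eq:8}.
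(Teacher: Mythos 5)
Your proposal is correct and follows essentially the same route as the paper: identify morphisms $\Spec R \to B(r)$ with isomorphism classes of spans $\Spec R \leftarrow T \to U(r)$ via the torsor property of $U(r) \to B(r)$, then transport through the correspondence \eqref{eq:4} using its naturality (your explicit invocation of the descent dictionary between torsors and Azumaya algebras is already implicit in the paper's construction of $T$ from $A$).
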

\begin{proof}
  First we relate the left-hand side of \eqref{eq:8} with that of \eqref{eq:4}. In brief, the variety $B(r)$ is the
  stack quotient of $U(r)$ by $\PGL(2)$. More explicitly, given a $\PGL(2)$-equivariant morphism $T \to U(r)$, take the
quotient to arrive at a morphism $\Spec R \to B(r)$. Conversely, given a morphism $f: \Spec R \to B(r)$, one can pull
the torsor $U(r) \to B(r)$ back along $f$ to produce a torsor $T \to \Spec R$, defined up to isomorphism. These
operations are inverse to each other, provided the torsor is always considered up to isomorphism, so we see that
the set of morphisms $\Spec R \to B(r)$ is in bijective correspondence with the set of isomorphism classes of spans
\[ \Spec R \leftarrow T \rightarrow U(r) \]
where the left map is a $\PGL(2)$-torsor and the right map is $\PGL(2)$-equivariant.

That is, the left-hand
side of \eqref{eq:8} is in bijective correspondence with isomorphism classes of possible left-hand sides of
\eqref{eq:4} as the torsor is allowed to vary. Naturality of the correspondence \eqref{eq:4} implies that the left-hand
side of \eqref{eq:8} is in bijective correspondence with isomorphism classes of possible right-hand sides of
\eqref{eq:4} as the algebra is allowed to vary. This is what we wanted.
\end{proof}

If we specialize to the case where $R$ is a reduced finite-type $\C$-algebra, the objects $T$ and $\Spec R$ are complex
varieties, and we may use topological methods to give computable obstructions to generating $A$ by $r$ elements. Write
$X$ for $\Spec R$ viewed as a complex analytic space. We may find a topological obstruction to the existence of
$\PGL(2; \C)$-equivariant maps $T \to U(r)$, by finding obstructions to a lift in the diagram:
\begin{equation} \label{eq:6} 
  \begin{tikzcd}
    &  B(r) \arrow[d] \\ X \arrow[r, "f"'] \arrow[dashed, ur]  & B \PGL(2;\C)  
  \end{tikzcd}
\end{equation}
where the maps to $B \PGL(2;\C)$ are those classifying the $\PGL(2;\C)$-bundles $T\to X$ and $U(r) \to B(r)$. From this point of
view, any cohomology class $\alpha \in \Hoh^*(B \PGL(2;\C); \ZZ)$ that maps to $0$ in $\Hoh^*(B(r); \ZZ)$ furnishes a
potential obstruction for $A$ to be generated by $r$ elements, and the obstruction actually applies if $f^*(\alpha) \neq 0$.

The final sections of \cite{First2022} are concerned with using this idea to produce examples of algebras of various kinds that cannot be
generated by $r$ elements. In \cite[Sections 13 and 14]{First2022}, the calculations for forms of
$\Mat_{s \times s}(\C)$ are done when $s > 2$, i.e., for Azumaya algebras of degree at least $3$. The theorem of \cite{First2022} is
essentially\footnote{The cited theorem applies over all fields of characteristic $0$.} this:
\begin{theorem}[Theorem 1.5(b) of \cite{First2022}]
  For any $s > 2$ and any integer $d \ge 0$, there exists a finite type ring $R$ over $\C$ of Krull dimension $d$ and an
  Azumaya $R$-algebra $B$ such that $B$ cannot be generated by fewer than
  \[ \left\lfloor \frac{d} {2(s-1)} \right\rfloor + 2 \]
  elements.
\end{theorem}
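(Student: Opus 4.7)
My plan mirrors the obstruction-theoretic framework illustrated by diagram~\eqref{eq:6}, applied in the degree-$s$ setting for $s \ge 3$. For each such $s$ and $r \ge 2$, define the analogous space $U(r;s) \subset \Mat_{s \times s}(\C)^r$ of $r$-tuples generating the matrix algebra, along with the free quotient $B(r;s) = U(r;s)/\PGL(s;\C)$. A degree-$s$ Azumaya algebra $A$ on $X = \Spec R$ is generated by $r$ elements if and only if the classifying map $f : X \to B\PGL(s;\C)$ of its associated $\PGL(s;\C)$-torsor lifts through $B(r;s) \to B\PGL(s;\C)$, so any class $\alpha \in \Hoh^*(B\PGL(s;\C);\Z)$ in the kernel of restriction to $B(r;s)$ gives a potential obstruction, realized whenever $f^*\alpha \neq 0$.

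The first substantive step is the analog of Corollary~\ref{cor:connectivityOfClassMap}: stratify the complement $\Mat_{s \times s}(\C)^r \setminus U(r;s)$ by the dimension of the common nontrivial invariant subspace of the tuple. Each stratum is a fiber bundle over a Grassmannian whose fiber codimension grows linearly in $r$ with slope depending on $s$, and the stratification bounds the connectivity of the classifying map by a quantity of the form $2(s-1)r - C_s$ for some constant $C_s$. Thus the restriction map $\Hoh^i(B\PGL(s;\C);\Z) \to \Hoh^i(B(r;s);\Z)$ is an isomorphism in degrees below that bound, and classes of degree just above it are candidates for the kernel.

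Next, produce a specific kernel class. Since $\Hoh^*(B\PGL(s;\C);\Q) = \Q[c_2,\ldots,c_s]$ is a polynomial ring in the Chern classes, a monomial $\alpha$ of the right total degree can be identified and shown to lie in the kernel by a direct computation with the Leray spectral sequence for $U(r;s) \to B(r;s)$ (or equivalently, the Gysin sequence of the stratification). To realize it nontrivially, let $X$ be a product of projective spaces of total complex dimension $d$ (or a large affine open thereof, to get a finite-type affine ring), equipped with the $\PGL(s;\C)$-bundle induced by a tuple of line bundles through the maximal torus $T \subset \PGL(s;\C)$. Then $f^*\alpha$ is an explicit symmetric polynomial in the hyperplane classes of the factors, and the twists of the line bundles can be tuned so that it is a nonzero multiple of the top class of $X$. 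Matching $\deg \alpha / 2$ against $d$ yields the bound $\lfloor d/(2(s-1)) \rfloor + 2$.

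The hardest step will be the stratification-and-kernel computation, since for $s \ge 3$ the failure to generate is governed by invariant subspaces of multiple dimensions rather than just common eigenlines as in the $s=2$ case treated in the present paper; one must both identify the correct monomial and verify it vanishes at the sharp degree. Once the kernel class of sharp degree is in hand, the realization on a product of projective spaces is routine Chern-class bookkeeping.
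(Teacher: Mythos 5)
First, a framing point: the paper does not prove this statement---it quotes it verbatim from \cite{First2022} as background, and its own contribution is the parallel $s=2$ result (Theorem \ref{th:fakeMainTheorem}). So your sketch is really a blind reconstruction of the argument of \cite[Sections 11, 13, 14]{First2022}, and at the level of architecture you have reproduced it correctly: a connectivity estimate for $B(r;s) \to B\PGL(s;\C)$ coming from the codimension $(s-1)(r-1)$ of the non-generating locus, a class in the kernel of $\Hoh^*(B\PGL(s;\C);\Q) \to \Hoh^*(B(r;s);\Q)$ at the critical degree, and a realization step producing a finite-type ring. However, the two steps that carry the content are not sound as written. The kernel step is entirely deferred: the connectivity bound only gives \emph{injectivity} below degree roughly $2(s-1)(r-1)$, and the whole force of the theorem is that non-injectivity occurs essentially \emph{at} that degree. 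That is \cite[Lemma 14.5]{First2022}, and it is the hard part---the present paper exists precisely because that lemma fails for $s=2$ and a different argument (Sections \ref{sec:cohom}--\ref{sec:rEven}) is needed. Asserting that ``a monomial of the right total degree can be identified and shown to lie in the kernel'' is not a step you can wave through.

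The realization step, as described, cannot work and its numerology is internally inconsistent. If $X$ is a product of projective spaces of complex dimension $d$ and $f^*\alpha$ is a nonzero multiple of the top class, then $\deg \alpha = 2d$; combined with $\deg\alpha \approx 2(s-1)(r-1)$ this yields $r \approx d/(s-1)+1$, not $d/(2(s-1))+1$. Moreover you cannot then pass to ``a large affine open'' to get a finite-type affine ring: an affine variety of complex dimension $d$ has the homotopy type of a CW complex of real dimension at most $d$, so its cohomology vanishes in degree $2d$, killing exactly the class you tuned. The correct bookkeeping---and the very source of the $2(s-1)$ in the denominator---is that the obstruction class, of cohomological degree $i = 2(s-1)(r-1)$, must be realized nontrivially on an \emph{affine} variety whose complex (equivalently Krull) dimension $d$ is at least the \emph{full} degree $i$, not half of it. This is what \cite[Theorem 11.4]{First2022} provides, via affine algebraic approximations of $B\PGL(s;\C)$ in the style of Totaro rather than products of projective spaces; it is the same black box the present paper invokes in its proof of Theorem \ref{th:fakeMainTheorem}. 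To repair your write-up you should replace the projective realization by an appeal to that theorem (or reprove it), and either prove the sharp kernel statement or cite \cite[Lemma 14.5]{First2022} for it.
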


The proof of \cite{First2022} does not extend to the case of $s=2$. The spaces $U(r)$ and $B(r)$ behave differently from
their analogues for $s>2$. The techniques of \cite{First2022} cannot determine the smallest $i$ such
that the natural map $\Hoh^i(B\PGL(2;\C); \QQ) \to \Hoh^i(B(r); \QQ)$ is not injective, in contrast to the case of
higher $i$ which is handled by \cite[Lemma 14.5]{First2022}.

In this paper, we adopt different methods to study $U(r)$, $B(r)$. Provided $r> 2$, we show that the natural map
\begin{equation*}
  \Hoh^i(B \PGL(2;\C); \QQ) \to \Hoh^i(B(r); \QQ)
\end{equation*}
is not injective when $i = 2r-2$ and $r$ is odd, or when $i = 2r$ and $r$ is even. These statements follow from
Propositions \ref{pr:cohoBOdd} and \ref{pr:cohoBEven}, along with the observation that $\Hoh^*(B \PGL(2;\C); \QQ) =
\QQ[p_1]$ where $|p_1| = 4$.

The same argument used to prove \cite[Theorem 1.5(b)]{First2022} now establishes our last main result:
\begin{theorem} \label{th:fakeMainTheorem} If $d \ge 0$, there exists a finitely generated $\C$-ring $R$ of Krull
  dimension $d$ and an Azumaya algebra $A$ of degree $2$ over $R$ such that $A$ cannot be generated by fewer than
  \[ 2 \left \lfloor \frac{d}{4} \right \rfloor + 2\]
  elements.
\end{theorem}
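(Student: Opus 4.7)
The plan is to follow the argument of \cite[Section~14]{First2022} that produced Theorem~1.5(b) loc.\ cit., substituting the cohomology calculation of Proposition~\ref{pr:cohoBOdd} for the one used there in the case $s \ge 3$. Set $k = \lfloor d/4 \rfloor$ and $r = 2k+1$. Since $\Hoh^*(B\PGL(2;\C);\Q) = \Q[p_1]$ with $|p_1| = 4$, the group $\Hoh^{4k}(B\PGL(2;\C);\Q)$ is one-dimensional with basis $p_1^k$; and because $4k = 2r - 2$, Proposition~\ref{pr:cohoBOdd} forces $p_1^k$ to lie in the kernel of
\[ \Hoh^{4k}(B\PGL(2;\C);\Q) \longrightarrow \Hoh^{4k}(B(r);\Q). \]

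The second step is to exhibit a finitely generated reduced $\C$-algebra $R$ of Krull dimension $d$, together with a degree-$2$ Azumaya algebra $A$ over $R$, such that the classifying map $f\colon X \to B\PGL(2;\C)$ of the associated torsor---where $X$ denotes $\Spec R$ viewed as a complex analytic space---satisfies $f^*(p_1^k) \neq 0$ in $\Hoh^{4k}(X;\Q)$. Granted such an $(R,A)$, the remainder is formal: if $A$ were generated by $r$ elements, then by~\eqref{eq:8} the map $f$ would factor through $B(r)$ up to homotopy, forcing $f^*(p_1^k) = 0$; this contradicts the construction, so $A$ requires at least $r + 1 = 2\lfloor d/4 \rfloor + 2$ generators.

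The main work is therefore the construction of $(R, A)$, which I would import directly from the proof of \cite[Theorem~1.5(b)]{First2022}. The scheme is to produce first a smooth projective $\C$-variety $Y$ of complex dimension $4k$ together with a degree-$2$ Azumaya algebra whose classifying map sends $p_1^k$ to a non-zero class---for instance a product of $k$ factors, each carrying an Azumaya algebra realizing one power of $p_1$ non-trivially, with non-vanishing of the full product checked via Künneth---then apply Jouanolou's device to produce a smooth affine variety $X_0$ of dimension $4k$ homotopy equivalent to $Y$, and finally pass to $X_0 \times \mathbb{A}^{d-4k}$ to reach Krull dimension $d$ without disturbing the relevant cohomology class. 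That $k = \lfloor d/4 \rfloor$ is the largest achievable exponent is forced by the Andreotti-Frankel theorem, since the smooth affine $X$ has the homotopy type of a CW complex of real dimension $\le d$ and hence $\Hoh^i(X;\Q) = 0$ for $i > d$. The main obstacle will be the algebro-geometric realization of $Y$ and its Azumaya algebra---i.e., translating a topological construction of a $\mathrm{PU}(2)$-bundle with non-vanishing $k$-th Pontryagin power into a $\PGL(2;\C)$-torsor on an algebraic variety---which is precisely the part supplied by the methods of \cite{First2022}.
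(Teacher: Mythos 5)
Your proposal is correct and follows essentially the same route as the paper: take $r = 2\lfloor d/4\rfloor + 1$, use Proposition \ref{pr:cohoBOdd} to see that $p_1^{\lfloor d/4\rfloor}$ dies in $\Hoh^{2r-2}(B(r);\Q)$, invoke the machinery of \cite{First2022} to produce the example in dimension $4\lfloor d/4\rfloor$, and then pad up to Krull dimension $d$. The only cosmetic difference is that the paper cites \cite[Theorem 11.4]{First2022} as a black box where you sketch the contents of its proof (the projective construction, Jouanolou's device), and the paper handles the dimension padding by a specialization argument at $x_j=0$ rather than by noting the cohomology class survives the product with affine space.
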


This theorem represents an improvement in our understanding of how many generators are required for a general degree-$2$
Azumaya algebra $A$ over a ring $R$ of Krull dimension $d$. In \cite[Theorem 1.2]{First2017}, it is proved that all such
$A$ can be generated by $d+2$ elements. 

An alternative argument, not involving the spaces $U(r)$ and $B(r)$, in \cite[Sec.~13]{First2022},
uses \cite{Shukla2020} to produce examples of finitely generated $\RR$-rings $R$ of Krull dimension $d$ and
quaternion algebras $A$ over $R$ that require $\lfloor \frac{d+1}{2} \rfloor$ generators. The
examples we produce in this paper represent an improvement on this in two respects.

First,
\[ 2 \left\lfloor \frac{d}{4} \right\rfloor + 2 \ge \left\lfloor \frac{d+1}{2} \right\rfloor \] with equality only when
$d \equiv 3 \pmod 4$, so that our result is a numerical improvement on that of
\cite[Sec.~13]{First2022}.

Second, the examples of
\cite[Sec.~13]{First2022} rely implicitly on the fact that $-1$ is not a square in $R$. The examples
we give here can be constructed over any field of characteristic $0$, and the same bounds on the
number of generators required can be obtained from the bounds over $\CC$ by use of the Lefschetz
principle, see \cite[Sec.~10]{First2022}. We give the arguments over $\CC$ to keep the paper short.

\medskip

Beyond proving Theorem \ref{th:fakeMainTheorem}, the topology of $B(r)$ and its relation to $B \PGL(2;\C)$ can answer other
questions about whether or not Azumaya algebras of degree $2$ can be generated by $r$ elements, by virtue of the lifting
obstruction in \eqref{eq:6}. 


\subsection*{Acknowledgements}

We are extremely grateful to Zinovy Reichstein for many helpful conversations and pointers to the literature in this
area. The paper has benefitted greatly from his generous advice.

\section{Preliminaries}

Algebras in this paper are unital, associative $\C$-algebras, not necessarily commutative. Here are some conventions
that hold in the rest of the paper.

\benw[inline]{Put definitions and common notation here}
\begin{itemize}
\item $\rM$ denotes $\Mat_{2 \times 2}(\C)$.
\item Except where it would lead to confusion, we write $\PGL(2)$ for the complex Lie group $\PGL(2;\C)$.
\item $r$ is a positive integer, and to avoid vacuity, $r \ge 2$.
\item An $r$-tuple of matrices $(A_1, \dots, A_r) \in \rM^r$ may be denoted $\vec{A}$.
\item $Z(r)$ is the subset of $\rM^r$ consisting of $r$-tuples $(A_1, \dots, A_r)$ that generate a proper subalgebra of
  $\rM$. 
\item $U(r)$ is the complement of $Z(r)$ in $\rM^r$.
\item The term ``variety'' will mean ``complex algebraic variety'' and all varieties we encounter are
  quasiprojective. By a ``point'' of a variety $X$, we mean a complex point.
\end{itemize}

\sebg[inline]{Need to include/provide references for: \\
  Connectedness of the complement of a closed subvariety of $\A^n$ \\ Thom-Gysin sequence }
\benw[inline]{We'll probably just assert these things and dare the referee to call us on them.}


\begin{proposition}
  The space $Z(r)$ of non-generating $r$-tuples is a closed subvariety of codimension $r-1$ in $\rM^r$.
\end{proposition}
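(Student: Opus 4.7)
The plan is to realize $Z(r)$ as the image of an incidence variety over the projective line. By Burnside's theorem, an $r$-tuple $\vec{A} = (A_1,\dots,A_r) \in \rM^r$ lies in $Z(r)$ if and only if the $A_i$ admit a common invariant $1$-dimensional subspace $L \subset \C^2$. This motivates introducing the incidence variety
\[ W = \{(\vec{A}, L) \in \rM^r \times \mathbb{P}^1 : A_i L \subseteq L \text{ for all } i\}, \]
together with its two projections: $p \colon W \to \mathbb{P}^1$ to the line, and $\pi \colon W \to \rM^r$ whose image is visibly $Z(r)$.

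The first step is to show that $p$ exhibits $W$ as a vector bundle of rank $3r$ over $\mathbb{P}^1$. Indeed, for a fixed $L \in \mathbb{P}^1$, the matrices preserving $L$ form the $3$-dimensional Borel subalgebra of $\rM$ associated to $L$ (in a basis with $L = \mathrm{span}(e_1)$, these are the upper triangular matrices). This identification is natural in $L$, so $p$ is a vector bundle and $W$ is a smooth irreducible variety with $\dim W = 3r+1$. Since $\pi$ factors through the proper projection $\rM^r \times \mathbb{P}^1 \to \rM^r$, it is proper, so $Z(r) = \pi(W)$ is closed and irreducible of dimension at most $3r+1$.

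To pin the dimension down exactly, I would show $\pi$ is generically finite by producing a single $\vec{A} \in Z(r)$ with a unique common invariant line. For instance, take $A_1 = \begin{pmatrix}1 & 1 \\ 0 & 2\end{pmatrix}$ and $A_2 = \begin{pmatrix}1 & 0 \\ 0 & 2\end{pmatrix}$, and set $A_3 = \cdots = A_r = 0$. The eigenlines of $A_1$ are $\mathrm{span}(e_1)$ and $\mathrm{span}(e_1 + e_2)$, while those of $A_2$ are $\mathrm{span}(e_1)$ and $\mathrm{span}(e_2)$, so $\mathrm{span}(e_1)$ is the unique common invariant line, making $\pi^{-1}(\vec{A})$ a single point. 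Consequently $\dim Z(r) = \dim W = 3r+1$ and the codimension in $\rM^r = \C^{4r}$ is $r-1$. The only step requiring content beyond formal bookkeeping is this generic-finiteness claim, which the explicit example handles cleanly; the rest is a mechanical translation between Burnside's theorem and the incidence-variety picture.
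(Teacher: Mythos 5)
Your proof is correct and follows essentially the same route as the paper: the same incidence variety over $\CP^1$, with the fibration over the line giving $\dim = 3r+1$, properness of the projection giving closedness, and generic finiteness of the other projection pinning down the dimension. The only (immaterial) difference is that the paper exhibits a dense open subset of the incidence variety on which the fibres are finite, whereas you produce a single point with finite fibre and invoke semicontinuity of fibre dimension.
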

\begin{proof}
  This is a special case of \cite[Proposition 7.1]{First2022}. Consider the incidence variety $Y$ in $\rM^r \times
  \CP^1$ consisting of pairs $\vec A \times L$ where $L$ is an eigenspace for each matrix in the $r$-tuple $\vec
  A$:
  \begin{equation*}
    \begin{tikzcd}
       & Y \arrow[dr, "\proj_2"] \ar[dl, "\proj_1"'] \\ \rM^r & & \CP^1.
    \end{tikzcd}
  \end{equation*}
  Since $\PGL(2;\C)$ acts on $Y$ and transitively on $\CP^1$, the projection $Y \to \CP^1$ is flat. Each fibre of the
  projection $Y \to \CP^1$ is an affine space of dimension $3r$, so that $Y$ has dimension $3r +1$. Since $Y$ is closed in $\rM^r \times \CP^1$, and $\CP^1$ is a proper variety, the image of $Y$ in $\rM^r$ is a closed
  subvariety. This image is precisely $Z(r)$.

  It remains to show that $\dim_\C Z(r) = \dim_\C Y$. There is a dense open subset $Y_0 \subset Y$ where the first
  matrix in $\vec A$ has at most $2$ different eigenspaces, and therefore the map $Y_0 \to \rM^r$ has finite fibres. It
  follows that the image of $\proj_1|_{Y_0}$ is a locally closed subvariety of $\rM^r$ of dimension $3r-1$. This
  image is dense in $Z(r)$ and so the result follows.
\end{proof}

There is an action of $\PGL(2)$ on $\rM^r$ given by
\begin{equation*}
  G \cdot (A_1, \dots, A_r) = (G A_1 G^{-1}, \dots, G A_r G^{-1}).
\end{equation*} This action restricts to an action of $\PGL(2)$ on $U(r)$.

Let $M'(r)$ denote the set of $r$-tuples of $2\times 2$-matrices $(A_1, \dots, A_r)$ such that $\Tr(A_i) = 0$ for all
$i$. There is a deformation retraction $\Phi$ of $\rM^r$ onto $M'(r)$, given by
\begin{equation}
  \label{eq:3} \Phi : \rM^r \times [0, 1] \to M, \qquad\Phi (A_1, \dots, A_r,t ) = (A_1 - t \Tr(A_1), \dots, A_r - t \Tr(A_r)).
\end{equation} Write $U'(r)$ for the intersection $U(r) \cap M'(r)$. The space $U'(r)$ is invariant under the $\PGL(2,
\C)$-action, and therefore there is a quotient $B'(r) = U'(r)/\PGL(2, \C)$.

\begin{proposition} \label{pr:tracelessHEquiv} The inclusions $U'(r) \to U(r)$ and $B'(r) \to B(r)$ are homotopy
equivalences.
\end{proposition}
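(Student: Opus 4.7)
The plan is to show that the deformation retraction $\Phi$ from \eqref{eq:3} already does all the work, once one checks two things: it preserves the subspace $U(r)$, and it is equivariant for the $\PGL(2)$-action.

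For the first inclusion, the key observation is that adding or subtracting a scalar multiple of the identity from a matrix $A_i$ does not change the subalgebra of $\rM$ generated by the tuple. Indeed, the subalgebra is unital, so $I \in \langle A_1, \dots, A_r \rangle$, and hence $\langle A_1, \dots, A_r\rangle = \langle A_1 + c_1 I, \dots, A_r + c_r I\rangle$ for any scalars $c_i$. (Equivalently, Burnside's criterion is about common eigenspaces, and $A_i$ and $A_i - cI$ share every eigenspace.) Since the formula $A_i - t\Tr(A_i)\cdot \tfrac{1}{2}I$ (which is what \eqref{eq:3} should be read as) simply modifies each $A_i$ by a scalar, the homotopy $\Phi$ restricts to a map $U(r) \times [0,1] \to U(r)$ whose endpoints are $\id_{U(r)}$ and a retraction onto $U'(r)$. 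This proves $U'(r) \hookrightarrow U(r)$ is a deformation retraction, hence a homotopy equivalence.

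For the second inclusion, note that $\Phi$ is $\PGL(2)$-equivariant: for any $G \in \PGL(2)$, one has $\Tr(GA_iG^{-1}) = \Tr(A_i)$, and conjugation commutes with subtracting a scalar matrix. Consequently the homotopy $\Phi$ descends to a homotopy $B(r) \times [0,1] \to B(r)$ exhibiting $B'(r)$ as a deformation retract of $B(r)$; here we use that $U(r) \to B(r)$ is a principal $\PGL(2)$-bundle (Proposition~\ref{pr:quotientIsPrinBundle}), so equivariant maps on $U(r)$ descend uniquely to maps on $B(r)$.

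The only real content is the observation that subtracting scalars does not affect the generated subalgebra, and everything else is bookkeeping. I do not expect any obstacle: there is no genuine obstacle here, since $\Phi$ has been chosen precisely so that both compatibilities hold on the nose.
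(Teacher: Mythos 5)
Your proof is correct and takes essentially the same approach as the paper, which simply observes that $\Phi$ restricts to $U(r)$ and descends to $B(r)$; you fill in the details the paper leaves implicit, namely that adding scalars to the $A_i$ does not change the generated subalgebra and that $\Phi$ is $\PGL(2)$-equivariant because trace is conjugation-invariant. Your reading of \eqref{eq:3} as $A_i - \tfrac{t}{2}\Tr(A_i)\,I_2$ is indeed the intended one, since the formula as literally printed does not land in $M'(r)$ at $t=1$.
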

\begin{proof} We observe that the deformation retraction in \eqref{eq:3} restricts to a deformation retraction of $U(r)$
onto $U'(r)$, and induces a deformation retraction of $B(r)$ onto $B'(r)$.
\end{proof}

\section{Invariant theory} \label{sec:InvTheory}

We wish to understand the action of the reductive group $\PGL(2)$ on $U(r)$ and in particular the quotient $B(r)=U(r)/
\PGL(2)$. What we present here is the $n=2$ case of a much more general theory, developed in \cite{Sibirskii1968}, \cite{Artin1969},
\cite{Procesi1976}, \cite{LeBruyn1987}, and elsewhere.

In this section, we encounter $\C$-schemes which we do not show to be varieties. While we are happy to use ``point of  $V$'' to
mean ``rational point of $V$'' when $V$ is a variety, whenever it is not obvious that $X$ is a variety,  we use the more
precise term ``rational  point''.

\begin{proposition} \label{pr:free}
  The action of $\PGL(2)$ on $U(r)$ is free.
\end{proposition}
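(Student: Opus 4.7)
The plan is to show that any element of $\PGL(2;\C)$ stabilizing a generating tuple must be the identity, which reduces to computing the centralizer of $\rM$ inside $\GL(2;\C)$.

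Suppose $G \in \PGL(2;\C)$ fixes an $r$-tuple $(A_1, \ldots, A_r) \in U(r)$, and choose a lift $\tilde G \in \GL(2;\C)$. The hypothesis that $G \cdot (A_1, \ldots, A_r) = (A_1, \ldots, A_r)$ means $\tilde G A_i \tilde G^{-1} = A_i$, i.e., $\tilde G$ commutes with each $A_i$. The first step is to observe that the set of matrices commuting with $\tilde G$ is a subalgebra of $\rM$ (it is closed under addition, scalar multiplication, and multiplication, and contains the identity). Since this subalgebra contains each $A_i$ and the $A_i$ generate $\rM$ by assumption, $\tilde G$ must commute with every element of $\rM$.

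The second step is to identify the centralizer of $\rM$ in $\rM$ itself: this is the center $Z(\rM)$, which consists precisely of the scalar matrices $\lambda I$ with $\lambda \in \C$. Hence $\tilde G = \lambda I$ for some $\lambda \in \C^\times$, and therefore $G$ is the identity in $\PGL(2;\C)$. This proves the stabilizer of every point of $U(r)$ is trivial, which is exactly freeness of the action.

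The argument is short and there is no serious obstacle; the only small point to state carefully is that passing between $\PGL(2;\C)$ and $\GL(2;\C)$ is harmless because scalars act trivially by conjugation, so working with a lift $\tilde G$ is legitimate. No approximation, limit, or topology enters — it is a purely algebraic consequence of Burnside's theorem (equivalently, of the fact that $\rM$ has trivial center modulo scalars).
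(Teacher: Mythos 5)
Your proof is correct and uses essentially the same argument as the paper: a stabilizing element must commute (or conjugation-fix) the generating tuple, hence the whole algebra $\rM$, and the only such elements are scalars, which are trivial in $\PGL(2;\C)$. The extra steps of lifting to $\GL(2;\C)$ and explicitly identifying the centralizer are just a more spelled-out version of the paper's phrasing.
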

\begin{proof}
  Let  $(A_1, A_2, \dots, A_r)$ be a point of $U(r)$.
  Suppose some $G \in \PGL(2)$ satisfies $GA_iG^{-1} = A_i$ for all $i$. Then conjugation by $G$ fixes any matrix that can be
  expressed as a polynomial in the $A_i$. Since this is the set of all matrices, we see that $G$ must act trivially on
  $\rM$. It follows that $G$ is the identity element of $\PGL(2)$. 
\end{proof}

Consider the coordinate ring of $r$-tuples of matrices, $\rM^r$. This ring is a polynomial ring in $4r$ variables, and
$\PGL(2)$ acts on it by virtue of its action on $\rM$. There is a subring, which we will call
$\mathcal R_{r,2}$, consisting of those $\PGL(2)$-invariant polynomial functions on
$\rM^r$.

The ring $\mathcal R_{r,2}$ is the coordinate ring of a universal categorical quotient scheme $\rM^r /\PGL(2)$ according
to \cite[Theorem 1.1]{Mumford1994}. A description of the rational points of $\Spec(\mathcal R_{r,2})$ is given by
\cite[12.6]{Artin1969}. In order to explain that description, we introduce some further terminology.

Let $F_r = \C \{ x_1, \dots, x_r\}$ denote the free associative algebra generated by $r$ indeterminates. Following
\cite{Artin1969}, given an $r$-tuple $\vec A =(A_1, \dots, A_r) \in \rM^r$, we define a matrix representation
\[ \phi_{\vec A} :F_r \to \rM \]
 by $x_i \mapsto A_i$. The set of representations is in obvious bijection with $\rM^r$. 

 Each representation $\phi_{\vec A}$ endows $\C^2$ with an $F_r$-module structure, and
 $\phi_{\vec A}$ is said to be \textit{semisimple} if $\C^2$ is totally reducible with this structure. That is,
 $\phi_{\vec A}$ is semisimple if either $\vec A$ has no common invariant subspace, in which case $\C^2$ is irreducible
 as an $F_r$-module, or if $\C^2$ decomposes as a direct sum of simultaneous eigenspaces of the $A_i$.

If $\phi_{\vec A}$ is not semisimple, there is an associated semisimple representation in the closure of the
$\PGL(2)$-orbit of $\phi_{\vec A}$. This
is a general fact proved in \cite[12.6]{Artin1969}. In the $2 \times 2$ case, the associated semisimple representation
is obtained from $\vec A$ by bringing $\vec A$ to simultaneous upper-triangular form
\[ \left(
    \begin{bmatrix}
      \lambda_1 & \ast \\ 0 & \mu_1 
    \end{bmatrix}, \begin{bmatrix}
      \lambda_2 & \ast \\ 0 & \mu_2 
    \end{bmatrix}, \dots, \begin{bmatrix}
      \lambda_r & \ast \\ 0 & \mu_r
    \end{bmatrix} \right) 
\] (possible since the $A_i$ do not generate $\rM$ and therefore they have a common eigenvector), then replacing the
$r$-tuple by
\[ \vec A'  = \left(
    \begin{bmatrix}
      \lambda_1 & 0 \\ 0 & \mu_1 
    \end{bmatrix}, \begin{bmatrix}
      \lambda_2 & 0 \\ 0 & \mu_2 
    \end{bmatrix}, \dots, \begin{bmatrix}
      \lambda_r & 0 \\ 0 & \mu_r
    \end{bmatrix} \right).
\]
We now can describe the points of $\Spec(\mathcal R_{r,2})$, as promised earlier.

\begin{theorem}[Artin] \label{th:artin}
  The rational points of $\Spec (\mathcal R_{r,2})$ are in one-to-one correspondence with the $\PGL(2)$-orbits of semisimple
  representations $\phi_{\vec A}$. The quotient map $q : \rM^r \to \Spec (\mathcal R_{r,2})$ takes a general $r$-tuple
  $\vec A'$ to the orbit of the associated semisimple representation.
\end{theorem}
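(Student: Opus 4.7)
The plan is to reduce to standard geometric invariant theory. Since $\PGL(2)$ is reductive and $\rM^r$ is affine, the quotient map $q : \rM^r \to \Spec(\mathcal R_{r,2})$ is a categorical quotient in the GIT sense: two rational points of $\rM^r$ have the same image under $q$ if and only if their $\PGL(2)$-orbit closures intersect, and every fibre of $q$ contains a unique closed $\PGL(2)$-orbit. Once I identify the closed orbits with the orbits of semisimple representations, the bijection asserted by the theorem follows, and the description of $q(\vec A)$ as the orbit of the associated semisimple representation will be immediate from the explicit deformation already recorded before the theorem statement.

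First I would verify that semisimple orbits are closed. By the Hilbert--Mumford criterion for affine actions of reductive groups, it is enough to check that for every nontrivial one-parameter subgroup $\lambda$ of $\PGL(2)$ and every semisimple $\vec A$, the limit $\lim_{t \to 0} \lambda(t) \cdot \vec A$, whenever it exists in $\rM^r$, already lies in $\PGL(2) \cdot \vec A$. After conjugation, I may assume $\lambda(t) = \mathrm{diag}(t^a, t^{-a})$ with $a > 0$; then conjugation by $\lambda(t)$ scales the $(2,1)$-entry of each $A_i$ by $t^{-2a}$ and the $(1,2)$-entry by $t^{2a}$, so the limit exists if and only if each $A_i$ is upper triangular in the chosen basis. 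If $\phi_{\vec A}$ is irreducible this is impossible, so the orbit is closed. If $\phi_{\vec A}$ is a direct sum of two one-dimensional subrepresentations, then after conjugation to the common eigenbasis each $A_i$ is diagonal, the limit equals $\vec A$, and the orbit is again closed.

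Conversely, if $\phi_{\vec A}$ is not semisimple, the explicit procedure already given in the text exhibits the failure of closedness: put $\vec A$ in simultaneous upper triangular but not diagonal form, and let $\lambda(t) = \mathrm{diag}(t,1)$. Conjugation by $\lambda(t)$ multiplies the $(1,2)$-entry of each $A_i$ by $t$, so
\[ \lim_{t \to 0} \lambda(t) \cdot \vec A = \vec A', \]
where $\vec A'$ is the diagonal $r$-tuple constructed in the text. Since $\vec A'$ is semisimple and $\vec A$ is not, they lie in distinct $\PGL(2)$-orbits, so $\PGL(2) \cdot \vec A$ is not closed. Combining the two directions, the closed $\PGL(2)$-orbits in $\rM^r$ are exactly the orbits of semisimple representations, and $\vec A'$ represents the unique closed orbit in the closure of $\PGL(2) \cdot \vec A$.

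The main obstacle is the closedness of semisimple orbits. The argument above relies on the observation that a nontrivial degeneration under a diagonalised one-parameter subgroup forces a common invariant subspace, so irreducibility or simultaneous diagonalisability precludes any such degeneration. Once this is in hand, the theorem follows by combining the GIT description of the quotient \cite{Mumford1994} with the characterisation of closed orbits just established, as in \cite[12.6]{Artin1969}.
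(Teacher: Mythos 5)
Your proposal is essentially correct, but it takes a genuinely different route from the paper: the paper gives no proof of this theorem at all, simply quoting Artin's general description of the quotient for $n \times n$ matrices (\cite[12.6]{Artin1969}), whereas you supply a self-contained argument for the $2 \times 2$ case from standard affine GIT. Your route --- rational points of $\Spec(\mathcal R_{r,2})$ correspond to closed $\PGL(2)$-orbits, each fibre of $q$ contains a unique closed orbit, and the Hilbert--Mumford/Kempf one-parameter-subgroup criterion identifies the closed orbits with the semisimple tuples --- is the standard modern proof; it buys independence from Artin's machinery at the cost of invoking two nontrivial black boxes (uniqueness of the closed orbit in each fibre, and the 1-PS criterion for orbit closedness), both of which are citable. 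One step is stated imprecisely: in the reducible semisimple case you say that ``after conjugation to the common eigenbasis each $A_i$ is diagonal, the limit equals $\vec A$.'' The conjugation that brings the one-parameter subgroup $\lambda$ into diagonal form need not diagonalize $\vec A$; what must actually be checked is that if $\vec B = g^{-1}\cdot \vec A$ is simultaneously diagonalizable and the limit of $\mathrm{diag}(t^a,t^{-a})\cdot \vec B$ exists --- which forces $\vec B$ to be simultaneously upper triangular --- then the limiting diagonal part of $\vec B$ still lies in the orbit. This is true, since the upper-triangular change of basis sending the second common eigenvector to $e_2$ conjugates $\vec B$ to its diagonal part, so the gap closes in one line; but as written the reduction conflates normalizing $\lambda$ with normalizing $\vec A$. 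With that repair, your argument is a complete proof of the statement the paper only cites.
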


\begin{proposition}  \label{pr:quotientIsPrinBundle}
  The quotient map $U(r) \to B(r) = U(r)/\PGL(2)$ is a principal $\PGL(2)$-bundle and $B(r)$ is a manifold.
\end{proposition}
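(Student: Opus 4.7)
My plan is to invoke Luna's étale slice theorem. The hypotheses to assemble are as follows: $\PGL(2)$ is a connected reductive algebraic group; $U(r)$ is a smooth variety, being an open subvariety of the affine space $\rM^r$; every point of $U(r)$ corresponds to an irreducible representation $\phi_{\vec A}$ of $F_r$, which is in particular semisimple, so its $\PGL(2)$-orbit is closed in $\rM^r$ by \cite[12.6]{Artin1969}; and the $\PGL(2)$-action on $U(r)$ is free by Proposition \ref{pr:free}.

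With these in place, Luna's slice theorem will produce, through any point $\vec A \in U(r)$, an étale slice $S$ such that the map $\PGL(2) \times S \to U(r)$ is étale, $\PGL(2)$-equivariant, and has saturated image. Because the stabilizer of $\vec A$ is trivial, this slice yields an étale-local section of the quotient map, so $U(r) \to B(r)$ is étale-locally trivial: a principal $\PGL(2)$-bundle in the étale topology.

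An étale morphism of smooth complex varieties is a local biholomorphism in the analytic topology, so étale-local triviality upgrades to analytic-local triviality. Consequently $U(r) \to B(r)$ is a principal $\PGL(2;\C)$-bundle in the analytic (and hence topological) sense, and the local product structure $\PGL(2) \times S$ exhibits $B(r)$ as a smooth complex manifold.

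The main obstacle will be ensuring that the orbits through points of $U(r)$ are closed in the \emph{ambient} $\rM^r$, not merely in $U(r)$, since this is the precise hypothesis required by Luna. Fortunately, Burnside's characterization of $U(r)$ as the set of $r$-tuples with no common eigenvector identifies its points with irreducible (hence semisimple) representations, from which closedness of orbits in $\rM^r$ follows immediately from Artin's result.
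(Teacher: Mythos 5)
Your argument is correct, but it takes a genuinely different route from the paper. The paper also reduces everything to the two inputs you isolate --- freeness of the action (Proposition \ref{pr:free}) and closedness of the orbits of points of $U(r)$ in the ambient $\rM^r$ (via Theorem \ref{th:artin}) --- but then proceeds through Mumford's GIT: it verifies the hypotheses of \cite[Amplification 1.11(2)]{Mumford1994} to conclude that every point of $U(r)$ is stable, invokes \cite[Converse 1.13]{Mumford1994} to get properness of the action and quasiprojectivity of the quotient, and finally applies the quotient manifold theorem for free and proper smooth actions to obtain the principal bundle and manifold structure. You instead apply Luna's \'etale slice theorem to the affine $\PGL(2)$-variety $\rM^r$ at points of $U(r)$; since the stabilizers are trivial, the slices give \'etale-local trivializations, which upgrade to analytic-local trivializations because \'etale maps of smooth complex varieties are local biholomorphisms. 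Your approach yields somewhat more (local triviality of the torsor in the \'etale topology, not merely a topological principal bundle --- relevant here since $\PGL(2)$ is not special, so Zariski-local triviality is not available), at the cost of invoking a heavier theorem; the paper's route is closer to elementary differential geometry once stability is established. Two small points you should make explicit: the saturated open image of $\PGL(2)\times S$ produced by Luna's theorem lives in $\rM^r$, so you should intersect it with the saturated open set $U(r)$ (shrinking $S$ accordingly) before claiming a trivialization of $U(r)\to B(r)$; and you should identify $B(r)$, a priori just the orbit space, with the (open) image of $U(r)$ in $\Spec\mathcal R_{r,2}$ so that the slices really do furnish charts on it. Both are routine consequences of the setup you already have.
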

\begin{proof}
  We show that $U(r) \to B(r)$ is a geometric quotient in the sense of \cite{Mumford1994}. We know that
  $\rM^r \to \Spec \mathcal R_{r,2}$ is a universal categorical quotient. Since $\rM^r$ admits only the trivial line
  bundle, and the trivial line bundle admits only one $\PGL(2)$-linearization, we know that every point of $\rM^r$ is
  semistable for the $\PGL(2)$ action.

  We observe that for all points $\vec A$ of $U(r)$, the hypotheses of \cite[Amplification 1.11(2)]{Mumford1994}
  apply. The regularity hypothesis is satisfied here because the action of $\PGL(2)$ on $U(r)$ is free. As for closure,
  if $\vec A \in U(r)$, then Theorem \ref{th:artin} assures us that the closed set $q^{-1}(q(\vec A))$ is precisely the orbit of
  $\vec A$.

  We deduce that $U(r)$ consists of stable points for the action of $\PGL(2)$, so that \cite[Converse 1.13]{Mumford1994}
  applies to say that the action of $\PGL(2)$ on $U(r)$ is proper and the quotient $B(r)$ is a quasiprojective
  $\C$-variety.
  
  Since the action of $\PGL(2)$ on $U(r)$ is proper, standard results in manifold theory  (see e.g., \cite{Lee2012}) tell us that $B(r)$ is a
  manifold, in this case a smooth variety, and the quotient map is a principal $\PGL(2)$-bundle map.
\end{proof}



  \section{Connectivity} \label{sec:connectivity}

  We make use of the following well-known result. The proof is an extended exercise in transversality.
  \begin{proposition}\label{pr:connComp} Let $Z \into M$ be a closed embedding of smooth manifolds of real codimension $d > 0$. Then
    the inclusion $M\sm Z \into M$ is $(d-1)$-connected.
  \end{proposition}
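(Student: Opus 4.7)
The plan is to prove this by showing that the relative homotopy groups $\pi_k(M, M\setminus Z)$ vanish for $k \le d-1$, which via the long exact sequence of the pair gives the claim. An element of $\pi_k(M, M\setminus Z)$ is represented by a continuous map $f: (D^k, S^{k-1}) \to (M, M\setminus Z)$, and we need to produce a homotopy, rel $S^{k-1}$, deforming $f$ to a map that lands entirely in $M\setminus Z$.

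First I would approximate $f$ by a smooth map, rel the boundary. This uses the fact that $M\setminus Z$ is open in $M$, so a sufficiently small smooth approximation of $f$ that is smooth in the interior and agrees with $f$ on a neighborhood of $S^{k-1}$ still sends $S^{k-1}$ into $M\setminus Z$, and is homotopic to $f$ rel $S^{k-1}$. Next I would apply the transversality theorem (in its relative form, rel a collar of the boundary where $f$ already avoids $Z$) to deform $f$ through a small homotopy to a smooth map $f': D^k \to M$ that is transverse to the closed submanifold $Z \subset M$ and still agrees with $f$ on $S^{k-1}$.

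The dimension count then finishes the argument. Transversality implies that $(f')^{-1}(Z)$ is a submanifold of $D^k$ of codimension $d$ in $D^k$, hence of dimension $k-d$. Since $k \le d-1 < d$, we have $k - d < 0$, so $(f')^{-1}(Z) = \emptyset$, i.e. $f'(D^k) \subset M\setminus Z$. This shows the relative homotopy class of $f$ is trivial, so $\pi_k(M, M\setminus Z) = 0$ for $k \le d-1$, proving the inclusion is $(d-1)$-connected.

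The only delicate point is ensuring the homotopies are rel $S^{k-1}$; this is handled in the standard way by first extending $f$ to a slightly larger domain (or working on a collar neighborhood of $\partial D^k$) where $f$ already lands in the open set $M\setminus Z$, and then applying relative versions of smooth approximation and transversality so that the deformation is stationary on a neighborhood of $S^{k-1}$. None of these steps require $M$ or $Z$ to be compact, since the approximations and transversality adjustments can be performed locally near the compact image $f(D^k)$.
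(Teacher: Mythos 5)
Your argument is correct and is precisely the "extended exercise in transversality" that the paper alludes to without writing out: kill $\pi_k(M, M\setminus Z)$ for $k\le d-1$ by smooth approximation and relative transversality, then conclude from the dimension count $k-d<0$. The points you flag as delicate (working rel a collar of $\partial D^k$, and locality of the perturbations near the compact image) are exactly the right ones and are handled correctly.
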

  \begin{corollary} \label{cor:connect}
    Let $Z \into M$ be a closed embedding of smooth manifolds of real codimension $d > 0$, and suppose $M$ is
    $(d-2)$-connected. Then $M \sm Z$ is $(d-2)$-connected
  \end{corollary}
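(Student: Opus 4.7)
The plan is to derive the corollary directly from Proposition \ref{pr:connComp}. By that proposition, the inclusion $\iota \colon M\setminus Z \hookrightarrow M$ is $(d-1)$-connected, which by definition means that the induced map $\iota_* \colon \pi_i(M\setminus Z) \to \pi_i(M)$ is an isomorphism for $i < d-1$ (together with $\pi_0$-surjectivity in the top degree, if one is careful about basepoints). In particular, for every $i \le d-2$, the map $\iota_*$ is an isomorphism.

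Since $M$ is assumed $(d-2)$-connected, $\pi_i(M) = 0$ for all $i \le d-2$. Combining this with the isomorphism above yields $\pi_i(M\setminus Z) = 0$ for all $i \le d-2$, which is the desired conclusion.

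The only subtlety worth flagging is the handling of basepoints and $\pi_0$: one should first observe that the $(d-1)$-connectedness of $\iota$ implies $M\setminus Z$ is nonempty and $\pi_0$-surjects onto $\pi_0(M) = \ast$, so $M\setminus Z$ is path-connected (since $d-1 \ge 1$ when $d \ge 2$; the case $d=1$ forces the connectedness hypothesis on $M$ to be vacuous anyway as $(d-2)$-connectedness is an empty condition). Once this is noted, the homotopy-group comparison goes through uniformly, and no further work is required.
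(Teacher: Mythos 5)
Your proof is correct and is the straightforward argument one would give; the paper itself states the corollary without proof, treating it as an immediate consequence of Proposition~\ref{pr:connComp}, and your chain of reasoning (the inclusion is $(d-1)$-connected, hence an isomorphism on $\pi_i$ for $i \le d-2$, and $\pi_i(M)=0$ in that range) is exactly the intended one.
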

  
  Here are two immediate applications of this.
  \begin{proposition}
    The space $U(r)$ is $(2r-4)$-connected.
  \end{proposition}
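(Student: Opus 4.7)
The plan is to deduce this immediately from Corollary \ref{cor:connect} applied to the pair $(\rM^r, Z(r))$. The ambient space $\rM^r$ is a complex vector space of dimension $4r$, hence contractible, so in particular it is $k$-connected for every $k$. By the earlier proposition, $Z(r)$ is a closed subvariety of $\rM^r$ of complex codimension $r-1$, hence a closed (singular) subset of real codimension $d = 2r-2$.

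The only subtlety is that $Z(r)$ need not be a smooth submanifold, so Corollary \ref{cor:connect} does not apply verbatim. However, $Z(r)$ is a complex algebraic subvariety of $\rM^r$, and one can stratify it by smooth locally closed subvarieties $Z(r) = \bigsqcup_i S_i$ of real codimension at least $2r-2$ in $\rM^r$. Removing the strata one at a time, in order of decreasing dimension (so the locus being removed is always closed in what remains), and applying Proposition \ref{pr:connComp} at each stage, gives that the inclusion $U(r) = \rM^r \setminus Z(r) \hookrightarrow \rM^r$ is $(2r-3)$-connected. Combined with the contractibility of $\rM^r$, this shows $U(r)$ is $(2r-3)$-connected, which is stronger than the stated $(2r-4)$-connected.

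The only step that is not completely formal is the stratification argument, but this is a standard device for reducing statements about complements of singular subvarieties to the smooth case, and it is precisely the kind of result the authors anticipate ``asserting and daring the referee to call them on.'' I would simply cite Proposition \ref{pr:connComp} (or its corollary) applied inductively along a Whitney stratification of $Z(r)$, noting that each stratum has real codimension $\ge 2r-2$ in the successive ambient spaces. The proof itself will therefore be a few lines long.
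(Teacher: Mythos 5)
Your approach is the same as the paper's: stratify $Z(r)$ into smooth locally closed subvarieties of real codimension at least $2r-2$ and peel them off one at a time using Proposition \ref{pr:connComp}/Corollary \ref{cor:connect}. Carried out correctly, this does prove the stated result. But your final step contains an off-by-one error, and the ``stronger'' conclusion you announce is false. Saying the inclusion $U(r) \hookrightarrow \rM^r$ is $(2r-3)$-connected means $\pi_i(U(r)) \to \pi_i(\rM^r)$ is an isomorphism for $i < 2r-3$ and a surjection for $i = 2r-3$; since $\rM^r$ is contractible, the surjectivity in degree $2r-3$ is vacuous, and all you may conclude is that $\pi_i(U(r)) = 0$ for $i \le 2r-4$, i.e.\ that $U(r)$ is $(2r-4)$-connected. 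This is exactly what Corollary \ref{cor:connect} is calibrated to give ($M$ being $(d-2)$-connected yields $M \sm Z$ being $(d-2)$-connected, with $d = 2r-2$). The claim that $U(r)$ is $(2r-3)$-connected is not merely unproved but wrong: by Proposition \ref{pr:cohoU} and Lemma \ref{lem:inonzero}, $\Hoh^{2r-3}(U(r);\Q) \neq 0$, and the nonvanishing of the class $\alpha_{2r-3}$ in that degree is the engine of the whole computation in Sections \ref{sec:rOdd} and \ref{sec:rEven}.

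A smaller point: to apply Proposition \ref{pr:connComp} inductively you must remove the strata in order of \emph{increasing} dimension, not decreasing. In a Whitney stratification the frontier of a stratum is a union of strictly lower-dimensional strata, so the minimal strata are closed in $\rM^r$, and each subsequent stratum is closed in the complement of those already removed. The top-dimensional stratum, by contrast, is only locally closed in $\rM^r$, so it cannot be removed first. Your parenthetical justification (``so the locus being removed is always closed in what remains'') is correct precisely for the opposite order from the one you state.
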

  \begin{proof}
    We know $U(r) = \rM^r \sm Z(r)$, where $Z(r)$ is defined as in Proposition . The variety $Z(r)$ is singular, but
    it admits a stratification into smooth locally closed subvarieties of complex dimension $\le 3r-1$. The space $\rM^r\homeo \C^{4r}$ is
    contractible, so by repeated application of Corollary \ref{cor:connect}, we see that $U(r)$ is $(d-2)$-connected where $d$ is
    the real codimension of $Z(r)$ in $\rM^r$, i.e., $d=2r-2$.
  \end{proof}

  \begin{corollary} \label{cor:connectivityOfClassMap}
    A map $B(r) \to B \PGL(2)$ classifying the principal $\PGL(2)$-bundle $U(r) \to B(r)$ is $(2r-3)$-connected.
  \end{corollary}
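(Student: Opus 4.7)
The plan is to turn the previous proposition, that $U(r)$ is $(2r-4)$-connected, into a statement about the classifying map by comparing $U(r)$ to the homotopy fibre of $f : B(r) \to B\PGL(2)$.

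First I would recall the standard fact that a principal $G$-bundle $P \to X$ with classifying map $f : X \to BG$ fits into a homotopy fibre sequence
\[ P \to X \xrightarrow{f} BG, \]
because $P$ is the pullback of the universal bundle $EG \to BG$ along $f$, and $EG \to BG$ is a fibration with contractible total space. In our situation this gives a fibre sequence
\[ U(r) \to B(r) \xrightarrow{f} B\PGL(2;\C). \]

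Next I would write down the associated long exact sequence of homotopy groups
\[ \cdots \to \pi_i(U(r)) \to \pi_i(B(r)) \xrightarrow{f_*} \pi_i(B\PGL(2;\C)) \to \pi_{i-1}(U(r)) \to \cdots \]
and feed in the connectivity input: by the previous proposition, $\pi_i(U(r)) = 0$ for every $i \le 2r-4$. A diagram chase then shows that $f_*$ is an isomorphism for $i \le 2r-4$, since both its kernel $\pi_i(U(r))$ and the group $\pi_{i-1}(U(r))$ controlling its cokernel vanish, and is surjective for $i = 2r-3$ because $\pi_{2r-4}(U(r)) = 0$ still holds. By definition this is exactly the statement that $f$ is $(2r-3)$-connected.

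There is essentially no hard step here; the whole content is reduced to the connectivity estimate for $U(r)$ already proved, together with the elementary identification of $U(r)$ with the homotopy fibre of $f$. The only point that could require a brief justification is that $B(r)$, being a smooth quasiprojective variety (Proposition \ref{pr:quotientIsPrinBundle}), has the homotopy type of a CW complex, so that the fibre sequence and the associated long exact sequence can be applied without issue.
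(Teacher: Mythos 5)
Your proof is correct and is essentially the paper's own argument: the paper likewise identifies $U(r)$ with the homotopy fibre of the classifying map and deduces the connectivity statement directly from the $(2r-4)$-connectivity of $U(r)$. You have merely made explicit the long exact sequence chase that the paper leaves implicit.
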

  \begin{proof}
    The map fits in a homotopy fibre sequence $U(r) \to B(r) \to B \PGL(2)$. Since $U(r)$ is $(2r-4)$-connected, the map
    $B(r) \to B \PGL(2)$ is $(2r-3)$-connected.
  \end{proof}

  \section{The case of \texorpdfstring{$r=2$}{r=2}} \label{sec:r=2}
  
  When $r=2$, we are able to determine the homotopy type of the variety $B(r)$ completely. In Section
  \ref{sec:InvTheory}, we observed that $B(r)$ is an open subvariety of an affine scheme $\Spec \mathcal
  R_{r,2}$. 
  \begin{theorem}[{\cite[Theorem 5]{Sibirskii1968}}]
    The ring $\mathcal R_{r,2}$ of $\PGL(2)$-invariant polynomials on the space of $r$-tuples $(A_1, \dots, A_r)$ of
    $2 \times 2$ matrices is generated by the elements
    \begin{align*}
      \Tr(A_i) & \quad \text{where $i  \in \{1, \dots, r\}$}, &  \Tr(A_i^2) & \quad \text{where $i  \in \{1, \dots, r\}$}, \\
      \Tr(A_iA_j) & \quad  \text{where $i, j \in\{1, \dots, r\}$ and $i < j$}, & \Tr(A_iA_j A_l)   & \quad \text{ where $i, j, l \in \{ 1, \dots, r\}$ and $i<j<l$}.
    \end{align*}
  \end{theorem}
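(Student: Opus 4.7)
The plan is to combine Procesi's first fundamental theorem for matrix invariants with Cayley-Hamilton reductions specific to the $n=2$ case. Procesi's theorem (in characteristic $0$) asserts that the ring of $\mathrm{GL}(n)$-invariant polynomial functions on $r$-tuples of $n\times n$ matrices is generated by the functions $\vec A \mapsto \Tr(A_{i_1}A_{i_2}\cdots A_{i_k})$ as $(i_1,\dots,i_k)$ ranges over all finite sequences in $\{1,\dots,r\}$. Since scalars act trivially on $\rM$ by conjugation, $\mathrm{GL}(2)$- and $\PGL(2)$-invariants coincide, so $\mathcal{R}_{r,2}$ is generated by such trace monomials; cyclicity of trace further cuts the indexing down to cyclic equivalence classes of sequences.

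The reduction to the short list in the statement is driven by the Cayley--Hamilton identity $A^2 = \Tr(A)A - \det(A)I$ for $2\times 2$ matrices, together with $\det(A) = \tfrac{1}{2}(\Tr(A)^2 - \Tr(A^2))$, and its polarization
\[
  AB + BA \;=\; \Tr(A)B + \Tr(B)A + \bigl[\Tr(AB) - \Tr(A)\Tr(B)\bigr]I.
\]
Whenever a cyclic trace monomial contains two adjacent copies of the same matrix, Cayley--Hamilton replaces that $A_i^2$ factor by a linear combination of $A_i$ and $I$ (with coefficients already among our generators), strictly shortening the word. For length-$3$ monomials $\Tr(A_iA_jA_l)$ with distinct indices, cyclic rotation places the smallest index first, and multiplying the polarized identity above by the remaining matrix and then tracing gives a relation of the form $\Tr(A_iA_jA_l) + \Tr(A_iA_lA_j) = (\text{polynomial in traces of length}\le 2)$, which allows passage to the canonical representative with $i<j<l$. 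For words of length $\ge 4$ I would induct on length: the inductive step is a \emph{length-$4$ reduction identity} expressing $\Tr(A_iA_jA_kA_l)$ as a polynomial in traces of length $\le 3$, obtained by applying Cayley--Hamilton to the product $A_iA_j$ regarded as a single $2\times 2$ matrix, or equivalently by polarizing Cayley--Hamilton in the pair $(A_i,A_j)$ against $(A_k,A_l)$ and reading off the trace of the resulting equation.

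The principal obstacle is extracting the precise length-$4$ identity from this machinery and checking that, together with the length-$3$ reductions already done, it closes the induction on word length. Once it is in place the induction is routine: any cyclic word of length $\ge 5$ contains a length-$4$ subword to which the identity applies, which strictly reduces the word length modulo terms already covered by the inductive hypothesis and the explicit list of generators.
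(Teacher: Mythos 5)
The paper offers no proof of this theorem --- it is quoted verbatim from Sibirskii --- so there is nothing internal to compare against. Your outline is the standard modern proof (Procesi's first fundamental theorem plus Cayley--Hamilton reductions) and its skeleton is sound: the passage from $\PGL(2)$- to $\mathrm{GL}(2)$-invariants is correct, the polarized Cayley--Hamilton identity you quote is correct, and the length-$\le 3$ reductions work exactly as you say. Two remarks. First, you could shortcut the entire induction on word length: Procesi's theorem already comes with the degree bound $2^n-1$, which for $n=2$ says the invariant ring is generated by traces of words of length at most $3$; then only your explicit length-$2$ and length-$3$ reductions are needed.

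Second, if you do want to run the induction yourself, the length-$4$ step you flag as the ``principal obstacle'' needs more care than you indicate: applying Cayley--Hamilton to $A_iA_j$ as a single matrix only disposes of $\Tr(A_iA_jA_iA_j)$, not of a general $\Tr(A_iA_jA_kA_l)$. The correct mechanism is the one hiding in your length-$3$ identity. Writing that identity as
\[
  \Tr(XYZ) + \Tr(XZY) = \Tr(X)\Tr(YZ) + \Tr(Y)\Tr(XZ) + \Tr(Z)\Tr(XY) - \Tr(X)\Tr(Y)\Tr(Z),
\]
substitute $Z \mapsto ZW$ to obtain $\Tr(XYZW) + \Tr(XZWY) = (\text{traces of strictly shorter words})$. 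Separately, the matrix form of the polarization, $YX = -XY + \Tr(X)Y + \Tr(Y)X + (\Tr(XY)-\Tr(X)\Tr(Y))I$, lets you transpose adjacent letters inside any trace word at the cost of a sign and shorter terms; two such transpositions give $\Tr(XZWY) = \Tr(XYZW) + (\text{shorter})$. Adding, $2\Tr(XYZW)$ is a polynomial in shorter traces, and division by $2$ is legitimate in characteristic $0$. Taking $X,Y,Z,W$ to be single letters handles length $4$; letting one of them be a block of letters closes the induction for length $\ge 5$. With that identity supplied, your argument is complete.
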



  In fact, we will use this result only when $r=2$. Here, there are no triple products and so every element of the ring
  $\mathcal R_{2,2}$ is a polynomial in the functions
\begin{equation}
  \label{eq:5}
  \Tr(A_1), \Tr(A_2), \Tr(A_1^2), \Tr(A_2^2), \Tr(A_1A_2).
\end{equation}
Explicitly, this implies that $\Spec \mathcal R_{2,2}$ is a closed subscheme of the variety $\C^5$, and therefore that
$B(2)$ is a locally closed subscheme. We know that the dimension of $B(2)$ as a $\C$-variety is
$\dim U(r) - \dim \PGL(2) = 8 - 3=5$, so that $B(2)$ must actually be dense in $\C^5$ and
$\Spec \mathcal R_{2,2} = \C^5$. That is, the functions in \eqref{eq:5} are algebraically independent.

\begin{proposition} \label{pr:firstPoly}
  Let $A_1$ and $A_2$ be two $2\times 2$-matrices. Then $A_1$ and $A_2$ do not generate the matrix algebra
  $\rM$ if and only if $\Tr(A_1A_2)$ is a root of the equation
  \begin{equation}
    \label{eq:1}
    [2 x- \Tr(A_1)\Tr(A_2)]^2 = [2 \Tr(A_1^2)- \Tr(A_1)^2] [2 \Tr(A_2^2)- \Tr(A_2)^2].
  \end{equation}
\end{proposition}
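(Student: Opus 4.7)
\emph{Proof plan.} The strategy is to first reduce to the traceless case, and then detect non-generation via a Gram-determinant calculation on $\mathfrak{sl}_2(\C)$.

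For the reduction, replace each $A_i$ by its traceless part $A_i' := A_i - \tfrac{1}{2}\Tr(A_i) I$. Since any unital subalgebra of $\rM$ contains $I$, the pair $(A_1, A_2)$ generates $\rM$ if and only if $(A_1', A_2')$ does. Direct computation gives $2\Tr(A_i'^2) = 2\Tr(A_i^2) - \Tr(A_i)^2$ and $2\Tr(A_1' A_2') = 2\Tr(A_1 A_2) - \Tr(A_1)\Tr(A_2)$, so with $x = \Tr(A_1 A_2)$ the equation \eqref{eq:1} transforms exactly into the equation $\Tr(A_1' A_2')^2 = \Tr(A_1'^2)\Tr(A_2'^2)$ for the traceless pair. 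Thus it suffices to prove the proposition under the assumption $\Tr(A_1) = \Tr(A_2) = 0$.

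Assume now $A_1, A_2 \in \mathfrak{sl}_2(\C)$. Cayley--Hamilton forces $A_i^2 = \tfrac{1}{2}\Tr(A_i^2)\, I$, and polarizing on $A_1 + A_2$ yields $A_1 A_2 + A_2 A_1 = \Tr(A_1 A_2)\, I$. These two relations reduce every monomial in $A_1, A_2$, so the unital subalgebra they generate equals $\mathrm{span}_\C \{I, A_1, A_2, A_1 A_2\}$. This in turn equals $\rM$ iff these four matrices are linearly independent, which by projecting onto traceless parts is equivalent to the three vectors $A_1, A_2, [A_1, A_2]$ being linearly independent in the $3$-dimensional space $\mathfrak{sl}_2(\C)$; here we use that $[A_1, A_2] = 2 A_1 A_2 - \Tr(A_1 A_2)\, I$.

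The last step uses the non-degenerate symmetric bilinear form $\langle X, Y \rangle = \Tr(XY)$ on $\mathfrak{sl}_2(\C)$, under which three vectors are linearly independent if and only if their Gram determinant is nonzero. Cyclicity of the trace gives $\Tr([A_1, A_2] A_i) = 0$ for $i = 1, 2$, so the Gram matrix of $(A_1, A_2, [A_1, A_2])$ is block diagonal; and Cayley--Hamilton applied to $A_1 A_2$, combined with $\det A_i = -\tfrac{1}{2}\Tr(A_i^2)$, yields $[A_1, A_2]^2 = \bigl(\Tr(A_1 A_2)^2 - \Tr(A_1^2)\Tr(A_2^2)\bigr) I$. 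Putting these together, the Gram determinant evaluates to $-2\bigl(\Tr(A_1^2)\Tr(A_2^2) - \Tr(A_1 A_2)^2\bigr)^2$, which vanishes precisely when the traceless form of the equation holds. Hence non-generation is equivalent to the equation, as required. The only real difficulty is careful bookkeeping of the factors of $2$ arising from Cayley--Hamilton, polarization, and the Gram-determinant formula; there is no conceptual obstacle.
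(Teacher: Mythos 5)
Your proof is correct and complete, and it takes a genuinely different route from the paper's. The paper does not prove Proposition \ref{pr:firstPoly} from scratch: it simply quotes Friedland's classification of pairs of $2\times 2$ matrices up to simultaneous similarity, where the locus of simultaneously triangularizable pairs is identified with the vanishing locus of exactly the polynomial in \eqref{eq:1}. Your argument is instead self-contained: reduce to traceless pairs (the algebraic bookkeeping $2\Tr(A_i'^2)=2\Tr(A_i^2)-\Tr(A_i)^2$ and $2\Tr(A_1'A_2')=2\Tr(A_1A_2)-\Tr(A_1)\Tr(A_2)$ is correct and makes \eqref{eq:1} collapse to $\Tr(A_1'A_2')^2=\Tr(A_1'^2)\Tr(A_2'^2)$); use Cayley--Hamilton and its polarization $A_1A_2+A_2A_1=\Tr(A_1A_2)I$ to show the generated unital subalgebra is $\operatorname{span}\{I,A_1,A_2,A_1A_2\}$; observe that this is all of $\rM$ iff $A_1,A_2,[A_1,A_2]$ are independent in $\mathfrak{sl}_2(\C)$; and detect independence by the Gram determinant of the trace form, which you correctly evaluate as $-2\bigl(\Tr(A_1^2)\Tr(A_2^2)-\Tr(A_1A_2)^2\bigr)^2$. (I verified the supporting identity $[A_1,A_2]^2=\bigl(\Tr(A_1A_2)^2-\Tr(A_1^2)\Tr(A_2^2)\bigr)I$.) The citation is of course shorter, but your argument makes the paper self-contained and exposes transparently why the discriminant factors as a perfect square. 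Two minor suggestions: state explicitly that the trace form on $\mathfrak{sl}_2(\C)$ is non-degenerate (e.g., because it is a nonzero multiple of the Killing form, or because $\mathfrak{sl}_2$ is the orthogonal complement of $I$ under the non-degenerate form on $\rM$), since that is what makes ``nonzero Gram determinant iff linearly independent'' valid; and note that the span $\{I,A_1,A_2,A_1A_2\}$ really is closed under multiplication by writing out the handful of reductions that use $A_i^2=\tfrac12\Tr(A_i^2)I$ and $A_2A_1=\Tr(A_1A_2)I-A_1A_2$.
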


This is a corollary of the results of \cite[Section 2]{Friedland1983}. Specifically, \cite[Theorem 2.9]{Friedland1983}
says that $A_1, A_2$ generate $\rM$, i.e., they are not simultaneously similar to upper triangular matrices, if and only
if they lie in the complement in $\rM^2$ of the closed affine variety defined by \eqref{eq:1}. This variety is called
$U$ in \cite[Theorem 2.2]{Friedland1983}.

As a consequence of this result, the space $B(2)$ is an open affine subvariety of $\C^5$ determined by the
non-vanishing of a single polynomial. 
In light of Proposition \ref{pr:tracelessHEquiv}, the space $B(2)$ admits a deformation retraction onto the space of
traceless pairs, $B'(2)$. When $\Tr(A_1) = 0 = \Tr(A_2)$, the equation \eqref{eq:1} simplifies, and so $B'(2)$ is the
affine open subvariety of $\C^3$ determined by the non-vanishing of the polynomial:
\begin{equation}
  \label{eq:2}
  x^2 - z_1z_2,
\end{equation}
where $z_1 = \Tr(A_1^2)$ and $z_2 = \Tr(A_2^2)$.

\begin{proposition} \label{pr:B2type}
  The space $B(2)$ is homotopy equivalent to the balanced product $S^1\times^{\ZZ/2\ZZ} S^2$, where $\ZZ/2\ZZ$ has the antipodal
  action on each factor.
\end{proposition}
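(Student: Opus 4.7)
The plan is to reduce to a homotopy computation with the smooth quadric cone in $\C^3$. By Proposition \ref{pr:tracelessHEquiv} together with the simplification leading to \eqref{eq:2}, it suffices to compute the homotopy type of $X := \C^3 \setminus V(x^2 - z_1 z_2)$. Since $x^2 - z_1 z_2$ is a nondegenerate quadratic form, a linear change of coordinates on $\C^3$ identifies $X$ with $\C^3 \setminus Q$, where $Q = \{a^2 + b^2 + c^2 = 0\}$ is the standard complex quadric cone.

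Next I would exhibit $X$ as a locally trivial fiber bundle over $\C^*$. Let $\phi : X \to \C^*$ be the map $\phi(a,b,c) = a^2 + b^2 + c^2$; since the form is nondegenerate, $\phi$ is a submersion. The scalar action of $\C^*$ on $\C^3$ preserves $X$, and $\phi$ intertwines this action with the squaring action $\lambda \cdot w = \lambda^2 w$ on the target. Pulling $\phi$ back along the double cover $\lambda \mapsto \lambda^2$ and trivializing by $(a, \lambda) \mapsto (a/\lambda, \lambda)$ identifies $X$ with $(\C^* \times F)/(\Z/2\Z)$, where $F := \phi^{-1}(1) = \{a^2 + b^2 + c^2 = 1\}$ is the smooth affine quadric and the generator of $\Z/2\Z$ acts by $(\lambda, f) \mapsto (-\lambda, -f)$.

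The last step is to replace $\C^*$ and $F$ by $S^1$ and $S^2$ via equivariant deformation retractions. The retraction $\C^* \to S^1$, $\lambda \mapsto \lambda/|\lambda|$, is obviously equivariant for the antipodal action. For $F$, I would write a point as $u + iv$ with $u, v \in \R^3$; the equations $|u|^2 - |v|^2 = 1$ and $\langle u, v\rangle = 0$ identify $F$ with the tangent bundle $TS^2$, where the zero section $\{v = 0\}$ corresponds to the real unit sphere $S^2 \subset F$. The standard radial retraction of $F$ onto this zero section is equivariant under $(u,v) \mapsto (-u,-v)$, which acts as the antipodal map on the zero section. Since $\Z/2\Z$ acts freely on $\C^* \times F$, the product of these equivariant retractions descends to a deformation retraction of $X$ onto $(S^1 \times S^2)/(\Z/2\Z) = S^1 \times^{\Z/2\Z} S^2$ with the claimed antipodal action on each factor.

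The main obstacle is the bookkeeping: ensuring that the fiber-bundle trivialization obtained after passing to the double cover, and the equivariance of the retractions in both base and fiber, assemble to give the balanced product on the nose. Because each individual retraction is strictly equivariant and the $\Z/2\Z$-action on the cover $\C^* \times F$ is free, this verification should be routine rather than genuinely difficult.
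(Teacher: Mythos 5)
Your proposal is correct and follows essentially the same route as the paper: reduce to the traceless pairs $B'(2)$, identify it with the complement of the smooth quadric cone in $\C^3$, write this complement as the $\Z/2\Z$-quotient of $\C^\times \times \{a^2+b^2+c^2=1\}$ via $(\lambda,f) \mapsto \lambda f$, identify the affine quadric with $TS^2$, and retract equivariantly onto $S^1\times S^2$. Your fiber-bundle/double-cover framing is a tidy way to package the paper's direct verification that $(\lambda,f)\mapsto\lambda f$ induces a homeomorphism $Y/(\Z/2\Z)\to B'(2)$, but it is the same construction.
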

\begin{proof}
  The inclusion $B'(2) \into B(2)$ is a homotopy equivalence, and $B'(2)$ is given by the non-vanishing of
  \eqref{eq:2}. After a change of coordinates, we can write $B'(2)$ as the affine complement of the subvariety of $\C^3$ determined by
  $x_1^2 + x_2^2 + x_3^2 = 0$.

  Consider the variety $Y$ consisting of quadruples $(\lambda, y_1, y_2, y_3) \in \C^4$ where $\lambda \in \C^\times$ and
  $y_1^2 + y_2^2 + y_3^2 = 1$.  This variety carries a free action by $\ZZ/2 \ZZ$ given by
  \[ (\lambda, y_1 , y_2, y_3) \mapsto (- \lambda, -y_1, -y_2, -y_3).\]
  There is a continuous map $f:\C^\times \times TS^2 \to B'(2)$ given by
  $f(\lambda, y_1, y_2, y_3) = (\lambda y_1, \lambda y_2, \lambda y_3)$. This map is surjective and satisfies
  $f(\lambda, y_1, y_2, y_3) = f(-\lambda, -y_1, -y_2, -y_3)$. It is elementary to check that the induced map
  \[\hat f : Y/(\ZZ/2\ZZ) \to B'(2)\] is a continuous bijection between $5$-manifolds, and therefore a
  homeomorphism.

  If $TS^2$ denotes the tangent bundle of the real $2$-manifold $S^2$, consisting of pairs of
  vectors $\vec u , \vec v \in \R^3$ satisfying $\vec u \cdot \vec v = 0$ and $\vec u \cdot \vec u = 1$, then the function
  \[ (\lambda, \vec u, \vec v) \mapsto (\lambda, \sqrt{1 + \| \vec v \|^2} \vec u + i \vec v ) \]
  is a homeomorphism $\C^\times \times  TS^2 \to Y$ (we learned this fact  from \cite{Huisman2016}). This homeomorphism
  is $\ZZ/2\ZZ$-equivariant where $\ZZ/2\ZZ$ acts by multiplication by $-1$ on each factor. We may write $\C^\times \times TS^2$ instead of $Y$.

  The embedding $S^1 \times S^2 \subset \C^\times \times TS^2$ sending $(\lambda, \vec u)$ to
  $(\lambda, \vec u, \vec 0)$ is a $\ZZ/2\ZZ$-equivariant weak equivalence. There is an induced homotopy equivalence
  $S^1 \times^{\ZZ/2\ZZ} S^2 \into B'(2)$.
\end{proof}

\section{Cohomology of \texorpdfstring{$U(r)$}{U(r)}} \label{sec:cohom}

In this section $\Hoh^*(X)$ will denote the cohomology of $X$ with rational
coefficients.

In the cases of $r>2$, we are unable to find an elegant description of the homotopy type of $B(r)$. We can,
however, calculate quite a good deal of its rational cohomology. The overall method is as follows. We first calculate
some of the rational cohomology of $U(r)$ by analyzing a stratification of $Z(r) = \rM^r \sm U(r)$ into smooth
subvarieties. This is achieved in this section. In principle, the calculation can be pushed further, but we do not need
any more for the rest of this paper.

In Section \ref{sec:rOdd}, we calculate some of the rational cohomology of $\Hoh^i(B(r))$ when $r$ is odd
by using the Serre spectral sequence of the homotopy fibre sequence $U(r) \to B(r) \to B \PGL(2)$. The difficulty is in
determining the differentials, and we employ several comparison arguments to show that the first differential that could
possibly be nonzero is, in fact, not zero. In Section \ref{sec:rEven}, we determine $\Hoh^i(B(r))$ for even $r$
in a range similar to that of the odd case, which we do by comparison to the odd case. This is enough to establish
Theorem \ref{th:fakeMainTheorem}.

We begin our calculation of the cohomology of $U(r)$ with some definitions:
\begin{itemize}
\item $T(r)$: those $r$-tuples $(A_1, \dots, A_r) \in \rM^r$ that pairwise commute, i.e., such that $[A_i,A_j] = \vec{0}$ for all $i,j \in \{1, \dots, r\}$. 
	\item $W(r) = Z(r) \sm T(r)$: those $r$-tuples $(A_1, \dots, A_r)$ that share an eigenvector and have the property that $[A_i,A_j] \neq \vec{0}$ for some $i,j \in \{1, \dots, r\}$.
\end{itemize} 
Note that $T(r) \subset Z(r)$, since the algebra generated by a pairwise commuting $r$-tuple is commutative. Moreover, $T(r)$ is a closed subvariety of $\rM^r$. 

\begin{proposition}\label{pr:dimT}
  The complex dimension of $T(r)$ is $2r+2$. 
\end{proposition}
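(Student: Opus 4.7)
The plan is to stratify $T(r)$ based on whether at least one matrix in the tuple is non-scalar. Write $D(r) \subset T(r)$ for the open subvariety where some $A_i$ is non-scalar, and $S(r) \subset T(r)$ for its closed complement, consisting of tuples of scalar matrices. Since the scalar matrices form a one-dimensional linear subspace of $\rM$, we have $\dim S(r) = r$. The main task is to show $\dim D(r) = 2r+2$, from which it follows that $\dim T(r) = \max(2r+2, r) = 2r+2$, since $r \ge 2$.

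The essential input is the standard fact that the centralizer of a non-scalar matrix $A \in \rM$ is exactly $\C[A] = \C \cdot I + \C \cdot A$, a two-dimensional commutative subalgebra. Thanks to commutativity of the centralizer, once one matrix in the tuple is fixed as non-scalar, any choice of the remaining matrices commuting with it automatically pairwise commute. Concretely, I would let $D_k \subset D(r)$ denote the open subvariety where $A_k$ is non-scalar, and consider the projection to the $k$-th coordinate
\[ \pi_k : D_k \longrightarrow \rM \setminus \C \cdot I. \]
The fibre over a non-scalar $A$ is precisely $C_\rM(A)^{r-1} \cong \C^{2(r-1)}$, so $D_k$ is realized as a rank-$2(r-1)$ vector bundle over a four-dimensional base, giving $\dim D_k = 4 + 2(r-1) = 2r+2$. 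Since $D(r) = \bigcup_{k=1}^r D_k$, the same dimension estimate holds for $D(r)$.

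The only point requiring minor care is that $\pi_k$ really is a Zariski-locally-trivial bundle of the claimed rank; this follows from the fact that the centralizer of $A$ is cut out by the linear equation $[A, -] = 0$, whose rank is locally constant on $\rM \setminus \C \cdot I$. As a cleaner alternative, I would restrict to the still-dense open sub-stratum where $A_k$ has distinct eigenvalues; there every commuting partner takes the explicit form $\alpha_i I + \beta_i A_k$, making the fibration trivial by inspection and yielding the same count of $4 + 2(r-1)$ parameters.

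I do not anticipate a real obstacle in this proof: the result is essentially a consequence of the centralizer structure of $2 \times 2$ matrices, and the only thing to verify beyond the generic dimension count is that $S(r)$ is indeed of strictly smaller dimension, which is immediate.
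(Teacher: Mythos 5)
Your proof is correct and matches the paper's argument: the paper likewise splits off the $r$-dimensional scalar locus, covers the rest by the open sets where $A_i$ is non-scalar, and uses the fact that the centralizer of a non-scalar $2\times 2$ matrix is $\C I + \C A_i$ to exhibit each such piece explicitly as $(\rM \setminus L) \times (\C^2)^{r-1}$, of dimension $2r+2$. Your vector-bundle phrasing is just a repackaging of that explicit product decomposition.
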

\begin{proof}
  Let $L$ denote the subalgebra of $\rM$ consisting of scalar matrices. Consider the subset of $T(r)$ consisting of $r$-tuples of scalar matrices, $L^r$. This has complex
  dimension $r$.

  Now we work with the variety $T(r) \sm L^r$ of $r$-tuples of matrices that commute, but at least
  one of which is not scalar. Let $Y_i \subset T(r) \sm L^r$ denote the open subvariety where the $i$-th matrix, $A_i$,
  is not scalar. The sets $Y_i$ furnish a (Zariski) open cover of $T(r) \sm L^r$.

  Since $A_i$ is a $2\times 2$ matrix, its eigenvalues have geometric multiplicity $1$. Its characteristic polynomial
  agrees with its minimal polynomial, i.e., it is \textit{non-derogatory} in the
  terminology of \cite[3.2.4]{Horn1985}, and therefore a matrix commutes with $A_i$ if and only if it is of the form
  $aA_i +bI_2$.

  There is an isomorphism of varieties
  \[( \rM \sm L) \times (\C^{2})^{r-1} \homeo Y_i \] given by
  \[ A_i , (a_1, b_1, \dots, \widehat{a_i ,b_i} , \dots, a_r, b_r) \mapsto (a_1 A_i + b_1 I_2, \dots, A_i , \dots , a_r
    A_i + b_r I_2 ), \]
  where the hat denotes omission. It follows that the complex dimension of $Y_i$, and therefore of $T(r) \sm L^r$, is
  $4 + (2r-2)$, which is $2r+2$, as claimed.
      \end{proof}
      
	
      Next we consider the variety $W(r)$, which is dense in $Z(r)$\benw{the argument for this is elementary, but seems
        not to have been made.} and therefore of complex dimension $3r+1$. If an
      $r$-tuple $(A_1, \dots, A_r)$ is in $W(r)$, then the $A_i$ have a unique shared $1$-dimensional
      eigenspace. Consider the map
\begin{equation}
  \label{eq:7}
  p:W(r) \to \CP^1
\end{equation}
that sends an $r$-tuple in $W(r)$ to its common 1-dimensional eigenspace. This map is $\PGL(2)$-equivariant, where
$\PGL(2)$ acts on $\CP^1$ in the usual way. Since the action on the target is transitive, the fibres are all isomorphic
to the fibre $p^{-1}([1:0])$ consisting of $r$-tuples $(A_1, \dots, A_r)$ of upper triangular matrices such that
$[A_i,A_j] \neq 0$ for some $i,j$. With this in hand, we can construct algebraic local trivializations over the
standard open cover of $\CP^1$ to see that $p$ is a fibre bundle.
\begin{proposition} \label{pr:w}
	If $r > 2$, then
	\[
		\widetilde{\Hoh}^i(W(r);\Z) \cong 
		\begin{cases}
			\Z & \text{if } i = 2; \\
			0 & \text{if } i < 2r-3 \text{ and } i \neq 2.
		\end{cases}
	\]
\end{proposition}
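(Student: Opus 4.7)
The plan is to compute the cohomology of $W(r)$ using the Serre spectral sequence of the fibre bundle $p\colon W(r) \to \CP^1$ from \eqref{eq:7}, after first showing that the fibre is highly connected.

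First, I will identify the fibre $F = p^{-1}([1:0])$ explicitly. It consists of those $r$-tuples of upper-triangular matrices $A_i = \begin{pmatrix} a_i & b_i \\ 0 & d_i \end{pmatrix}$ for which $[A_i, A_j] \neq 0$ for at least one pair $i,j$. A direct calculation shows that the only potentially nonzero entry of $[A_i, A_j]$ equals $(a_i - d_i) b_j - (a_j - d_j) b_i$, so the complement $C$ of $F$ inside $\C^{3r}$ consists of tuples for which the $r$ vectors $(a_i - d_i, b_i) \in \C^2$ all lie on a single line through the origin. The collinear-tuple locus in $(\C^2)^r$ is the image of a $\C^r$-bundle over $\CP^1$, so has complex dimension $r+1$; freeing the traces $a_i + d_i$ contributes an independent $\C^r$, giving $\dim_\C C = 2r+1$ and real codimension $2r - 2$ in $\C^{3r}$.

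Stratifying $C$ into smooth locally closed subvarieties as in the proof of connectivity of $U(r)$ and applying Corollary~\ref{cor:connect} iteratively, I conclude that $F$ is $(2r-4)$-connected. I then run the integral Serre spectral sequence
\[
E_2^{p,q} = \Hoh^p(\CP^1; \Hoh^q(F;\Z)) \Rightarrow \Hoh^{p+q}(W(r); \Z).
\]
Since $\CP^1$ is simply connected, the local system is trivial, and the $E_2$-page is concentrated in columns $p = 0, 2$. In the range $q \le 2r-4$ the only nonvanishing entries are $E_2^{0,0} = \Z$ and $E_2^{2,0} = \Z$, and for degree reasons the only possibly nontrivial differential is $d_2 \colon E_2^{0, q} \to E_2^{2, q-1}$; each such $d_2$ in the relevant range has either source or target equal to zero, so the spectral sequence collapses. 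This yields $\Hoh^0(W(r); \Z) = \Z$, $\Hoh^2(W(r); \Z) = \Z$, and $\Hoh^i(W(r); \Z) = 0$ for the remaining $i$ with $1 \le i \le 2r-4$, which is the statement.

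The main obstacle is the codimension count for $C$: one must recognise that the $\binom{r}{2}$ commutator equations are far from independent and cut out exactly the collinearity condition on $r$ vectors in $\C^2$, giving complex codimension $r - 1$ rather than something larger. Once this codimension is pinned down, high connectivity of $F$ is immediate from Corollary~\ref{cor:connect}, and the spectral sequence argument is essentially forced by the sparseness of $\Hoh^*(\CP^1)$.
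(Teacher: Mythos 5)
Your proof is correct and follows the same overall strategy as the paper: identify the fibre $F = p^{-1}([1:0])$ as the complement in $\C^{3r}$ of the commuting locus $Q$, compute $\dim_\C Q = 2r+1$ to get that $F$ is $(2r-4)$-connected via Corollary~\ref{cor:connect}, and then read off $\Hoh^*(W(r))$ from the Serre spectral sequence of $F \to W(r) \to \CP^1$. The one place you diverge is in the dimension count for $Q$: the paper invokes ``an argument very similar to that of Proposition~\ref{pr:dimT}'' (i.e.\ using that a non-scalar $2\times 2$ matrix is non-derogatory, so its commutant is $\C A_i + \C I$), whereas you compute the single nontrivial entry of $[A_i,A_j]$ to be $(a_i-d_i)b_j - (a_j-d_j)b_i$ and recognize $Q$ as (trace data)$\times$(collinear $r$-tuples in $\C^2$), then bound the collinear locus by an incidence variety over $\CP^1$. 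Both give $2r+1$; your version is more explicit and makes visible why the $\binom r2$ commutator equations cut out something of codimension only $r-1$. Your remark about stratifying $C$ into smooth pieces before applying Corollary~\ref{cor:connect} is appropriate and matches the care taken earlier in the paper for $Z(r)$. One small point in your favor: you run the integral Serre spectral sequence, which is what the statement (with $\Z$ coefficients) actually requires, whereas the paper's final sentence says ``rational cohomology,'' apparently a slip.
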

\begin{proof}
  Identify the variety of $r$-tuples of upper triangular matrices with $\C^{3r}$. In the map of \eqref{eq:7}, the fibre
  $F = p^{-1}([1:0])$ is an open subvariety of $\C^{3r}$ and is therefore smooth. The space $W(r)$ is the total space of
  a fibre bundle in the category of $\C$-varieties with smooth base and fibre.
	
  We calculate the connectivity of $F$ as follows. The space $F$ consists of $r$-tuples of upper-triangular matrices
  $(A_1, \dots, A_r)$ such that at least one commutator $[A_i, A_j]$ is not $0$. Write $\C^{3r}$ for the
  space of all $r$-tuples of upper triangular matrices, and let $Q$ denote the complement of $F$ in $\C^{3r}$, i.e., the
  closed subvariety determined by the vanishing of the commutators. An argument very similar to that of Proposition
  \ref{pr:dimT} shows that $\dim_\C Q = 2r+1$, so that the codimension of $Q$ in $UT$ is $r-1$. Since $F$ is the
  complement in $\C^{3r}$ of a closed subvariety of complex codimension $r-1$, the variety $F$ is $2r-4$-connected by
  Corollary \ref{cor:connect}.

  The statement of the proposition now follows from the Serre spectral sequence in rational cohomology for the fibration
  $F \to W(r) \to \CP^1$.
\end{proof}
\begin{proposition} \label{pr:cohoU}
	If $r > 2$, then
	\[
		\widetilde{\Hoh}^i(U(r);\Z) \iso 
		\begin{cases}
			\alpha_i \Z & \text{if } i = 2r-3 \text{ or } i=2r-1; \\
			0 & \text{if } i < 4r-6 \text{ and } i \neq 2r-3,2r-1.
		\end{cases}
	\]
\end{proposition}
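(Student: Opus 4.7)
The plan is to exploit the decomposition $\rM^r = U(r) \sqcup W(r) \sqcup T(r)$ in two stages. First, remove $T(r)$ from $\rM^r$ to obtain a highly connected open subvariety $V$. Then, since $W(r)$ is smooth and closed in $V$, apply the Thom-Gysin sequence for the inclusion $W(r) \hookrightarrow V$ to transfer the cohomology computation of Proposition \ref{pr:w} to a computation of $\Hoh^*(U(r)) = \Hoh^*(V \setminus W(r))$.

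First I would verify that $V := \rM^r \setminus T(r)$ is $(4r-6)$-connected. The variety $T(r)$ is singular, but admits a stratification $T(r) = L^r \sqcup (T(r) \setminus L^r)$ into two smooth locally closed subvarieties, where $L^r \cong \C^r$ is the set of scalar $r$-tuples (of real codimension $6r$ in $\rM^r$) and $T(r) \setminus L^r$ has real codimension $4r - 4$ and is smooth (this is visible from the local isomorphisms $Y_i \cong (\rM \setminus L) \times \C^{2(r-1)}$ constructed in the proof of Proposition \ref{pr:dimT}). Two successive applications of Corollary \ref{cor:connect} --- first removing $L^r$ (closed in $\rM^r$), then removing $T(r) \setminus L^r$ (closed in $\rM^r \setminus L^r$) --- yield that $V$ is $(4r-6)$-connected. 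In particular, $\Hoh^i(V; \Z) = 0$ for $1 \leq i \leq 4r - 6$.

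Next, I would observe that $W(r) = Z(r) \cap V$ is closed in $V$ and smooth, being the total space of a fiber bundle over $\CP^1$ with smooth fiber in the sense of the discussion following \eqref{eq:7}, of real codimension $2r - 2$ in $V$. Its complex normal bundle is canonically oriented, so the integral Thom-Gysin sequence for this pair reads
\[ \cdots \to \Hoh^{i-(2r-2)}(W(r)) \to \Hoh^i(V) \to \Hoh^i(U(r)) \to \Hoh^{i-(2r-3)}(W(r)) \to \Hoh^{i+1}(V) \to \cdots. \]
For $1 \leq i \leq 4r - 7$, both flanking $V$-terms vanish by Step 1, yielding the isomorphism
\[ \Hoh^i(U(r); \Z) \cong \Hoh^{\,i-(2r-3)}(W(r); \Z). \]
Proposition \ref{pr:w} then identifies the right-hand side as $\Z$ exactly when $i - (2r-3) = 0$ or $i - (2r-3) = 2$, that is, when $i = 2r-3$ or $i = 2r-1$, and as $0$ for other values of $i$ in the allowed range.

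The main obstacle is the bookkeeping around smoothness and closedness required to invoke Corollary \ref{cor:connect} and the Thom-Gysin sequence: one must peel $T(r)$ off in two stages because only its strata are smooth, and one must check that $W(r)$ is smooth and closed in $V$ (not in $\rM^r$, where it is only locally closed). These facts are all available from the explicit descriptions already given, and once assembled the degree shift by $2r-3$ in the Gysin sequence lines the nontrivial cohomology of $W(r)$ in degrees $0$ and $2$ up with exactly the predicted degrees $2r-3$ and $2r-1$ of $U(r)$.
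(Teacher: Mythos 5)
Your proposal is correct and follows essentially the same route as the paper: establish that $\rM^r \setminus T(r)$ is $(4r-6)$-connected via Corollary \ref{cor:connect} and Proposition \ref{pr:dimT}, then run the Thom--Gysin sequence for the closed smooth subvariety $W(r)$ of complex codimension $r-1$ in $\rM^r \setminus T(r)$ with open complement $U(r)$, and read off the answer from Proposition \ref{pr:w}. The only difference is that you spell out details the paper compresses --- the two-stage stratification of $T(r)$ into $L^r$ and $T(r)\setminus L^r$, the smoothness and closedness checks for $W(r)$ in $\rM^r\setminus T(r)$, and the complex orientation needed for the integral Gysin sequence --- all of which are correct and in the spirit of the paper's terse citation of the same facts.
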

\begin{proof}
  Corollary \ref{cor:connect} and Proposition \ref{pr:dimT} combine to show $\rM^r \sm T(r)$ is $(4r-6)$-connected. It
  is $4r$-dimensional and $W(r)$ is therefore of complex codimension $r-1$ in it. Now applying the Thom--Gysin sequence
  \[
    \begin{tikzcd}
      \cdots \rar & \widetilde\Hoh^{i-1}(U(r)) \rar & \Hoh^{i-2(r-1)}(W(r)) \rar & \widetilde \Hoh^i(\rM^r \sm T(r))
      \rar & \widetilde \Hoh^i(U(r)) \rar & \cdots 
    \end{tikzcd}
  \] 
  to $\rM^r \sm T(r)$, $W(r)$ and $U(r) = (\rM^r \sm T(r) ) \sm W(r)$ gives the result.
\end{proof}

\section{The case of odd \texorpdfstring{$r$}{r}}\label{sec:rOdd}

Again, in this section $\Hoh^*(X)$ denotes the cohomology of $X$ with rational coefficients.

We compute $\Hoh^i(B(r))$ in degrees $i < 4r-6$ by means of the Serre spectral sequence associated to
the homotopy fibre sequence $U(r) \to B(r) \to B \PGL(2)$:
\[
	\Eoh_2^{p,q} = \Hoh^p(B \PGL(2); \Hoh^q(U(r))) \Rightarrow \Hoh^{p+q}(B(r)).
\]
We restrict our attention to terms $\Eoh_k^{p,q}$ with $p+q < 4r-6$. We may identify
\[ \Hoh^*(B \PGL(2;\C)) = \Hoh^*(B \SO_3)\] since $\SO_3 \weq \SO(3; \C) \iso \PGL(2;\C)$. The rational cohomology
ring $\Hoh^*(B \SO_3)$ is $\Q[p_1]$, where $p_1$ is the Pontryagin class and has degree 4, \cite[Theorem
1.4]{Brown1982}. The $\Eoh_2$-page of the spectral sequence can be determined in the range $p+q < 4r-6$ by reference to
Proposition \ref{pr:cohoU}.
\smallskip

Suppose now that $r$ is odd. The first differential that is not obviously $0$ is the
transgressive differential $d_{2r-2}: \Hoh^{2r-3}(U(r)) \to \Hoh^{2r-2}(B \PGL(2))$, indicated in Figure \ref{fig:E2r-2SSSB(r)}.
\begin{figure}[h]
	\centering
        \includegraphics{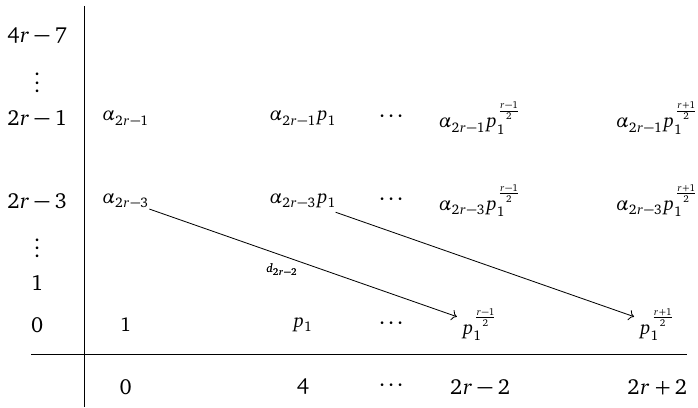}
	\caption{The $\Eoh_{2r-2}$-page of the Serre spectral sequence converging to $\Hoh^*(B(r))$ when $r$ is odd. All indicated classes generate a term isomorphic to $\Q$. The empty terms are all $0$.}
	\label{fig:E2r-2SSSB(r)}
\end{figure}

Once we establish that this differential is not $0$, then the computation of $\Hoh^i(B(r))$ in the range $i < 4r-6$
(Proposition \ref{pr:cohoBOdd}) follows. The differentials with source and target in the range $p+q < 4r-6$ on
succeeding pages all vanish for trivial reasons.

\subsection*{The first comparison}
Embed $S^1$ in $\PGL(2)$ via the map
\[
	\lambda \mapsto 
	\begin{bmatrix}
		1 & 0 \\
		0 & \lambda
	\end{bmatrix}.
\]
Then $\lambda \in S^1$ acts on $U(r)$ by
\begin{equation}\label{eq:S1Action}
	\left( 
	\begin{bmatrix}
		a_1 & b_1 \\
		c_1 & d_1
	\end{bmatrix}, \dots, 
	\begin{bmatrix}
		a_r & b_r \\
		c_r & d_r
	\end{bmatrix}
	\right) \mapsto 
	\left(
	\begin{bmatrix}
		a_1 & \bar{\lambda} b_1 \\
		\lambda c_1 & d_1
	\end{bmatrix}, \dots, 
	\begin{bmatrix}
		a_r & \bar{\lambda} b_r \\
		\lambda c_r & d_r
	\end{bmatrix}
	\right).
\end{equation}
We obtain a map of homotopy fibre sequences
\begin{equation}\label{eq:1stComp}
	\begin{tikzcd}
		U(r) \rar & B(r) \rar & B \PGL(2) \\
		U(r) \rar \uar[equal] & U(r)/S^1 \rar \uar & B S^1. \uar
	\end{tikzcd}
      \end{equation}
      
\subsection*{The second comparison}
Consider the inclusion $i:S^{2r-3} \times S^{2r-3} \to U(r)$ given by
\[
	((b_1, \dots, b_{r-1}),(c_1, \dots, c_{r-1})) \mapsto 
	\left(
	\begin{bmatrix}
		0 & b_1 \\
		c_1 & 0
	\end{bmatrix}, \dots, 
	\begin{bmatrix}
		0 & b_{r-1} \\
		c_{r-1} & 0
	\end{bmatrix},
	\begin{bmatrix}
		1 & 0 \\
		0 & -1
	\end{bmatrix} 
	\right),
\]
where $S^{2r-3}$  is embedded in $\C^{r-1}$ in the usual way. To see that the target of $i$ is in fact $U(r)$, observe that the eigenspaces of the last matrix of an $r$-tuple $(A_1, \dots, A_r)$ in the image of $i$ are $\spn \{e_1\}$ and $\spn \{e_2\}$. Since $b_i,c_j \neq 0$ for some $i,j \in \{1, \dots, r-1\}$, the $r$-tuple $(A_1, \dots, A_r)$ does not have a common eigenvector and hence generates the matrix algebra.

Endow $S^{2r-3} \times S^{2r-3}$ with the $S^1$-action $\lambda\cdot(b,c) = (\bar{\lambda} b, \lambda c)$. Then $i$ is
equivariant with respect to this action, so we obtain a homotopy commutative diagram of homotopy fibre sequences:
\begin{equation}\label{eq:2ndComp}
	\begin{tikzcd}
		U(r) \rar & U(r)/S^1 \rar & B S^1 \\
		S^{2r-3} \times S^{2r-3} \uar["i"] \rar & S^{2r-3} \times_{S^1} S^{2r-3} \uar \rar & BS^1. \uar[equal]
	\end{tikzcd}
\end{equation}
\begin{notation}
	Let $'\Eoh_k^{p,q}$, $'d_k$ (respectively $''\Eoh_k^{p,q}$, $''d_k$) denote the terms and differentials of the Serre spectral sequence associated to the top (bottom, respectively) homotopy fibre sequence in \eqref{eq:2ndComp}. We also make the identification 
	\[
		\Hoh^{2r-3}(S^{2r-3} \times S^{2r-3}) \cong \Hoh^{2r-3}(S^{2r-3}) \oplus \Hoh^{2r-3}(S^{2r-3})
	\] 
	via the K\"{u}nneth formula.
\end{notation}
\begin{lemma}\label{lem:''d_{2r-2}}
	The transgressive differential $''d_{2r-2}: \Hoh^{2r-3}(S^{2r-3} \times S^{2r-3}) \to \Hoh^{2r-2}(B S^1)$ is
        surjective and the kernel is $\spn \{(\rho_{2r-3}, -\rho_{2r-3})\}$, where $\rho_{2r-3}$ is a generator of $\Hoh^{2r-3}(S^{2r-3})$. 
\end{lemma}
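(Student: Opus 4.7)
The plan is to exploit the two coordinate projections $\pi_1, \pi_2 : S^{2r-3} \times S^{2r-3} \to S^{2r-3}$ together with naturality of the Serre spectral sequence. Under the given $S^1$-action, $\pi_1$ is equivariant if one puts the \emph{anti-Hopf} action $\lambda \cdot b = \bar\lambda b$ on the target, while $\pi_2$ is equivariant for the standard Hopf action $\lambda \cdot c = \lambda c$. Each $\pi_i$ therefore gives a map of fibrations
\[
\begin{tikzcd}
S^{2r-3} \times S^{2r-3} \rar \dar["\pi_i"'] & S^{2r-3} \times_{S^1} S^{2r-3} \times_{S^1} ES^1 \rar \dar & BS^1 \dar[equal] \\
S^{2r-3} \rar & S^{2r-3} \times_{S^1} ES^1 \rar & BS^1.
\end{tikzcd}
\]
Via the K\"unneth identification $\Hoh^{2r-3}(S^{2r-3} \times S^{2r-3}) \cong \pi_1^*\Hoh^{2r-3}(S^{2r-3}) \oplus \pi_2^*\Hoh^{2r-3}(S^{2r-3})$, naturality reduces the computation of $''d_{2r-2}$ to computing the transgression for each of the two lower fibrations separately.

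For the Hopf-action fibration (corresponding to $\pi_2$), the Borel construction is homotopy equivalent to $\CP^{r-2}$. Since its Serre spectral sequence has $E_2 = \Q[u] \otimes \Lambda(\rho_{2r-3})$ and converges to $\Q[u]/(u^{r-1})$, the only possibility is that $d_{2r-2}(\rho_{2r-3})$ is a nonzero multiple of $u^{r-1}$; by choice of generator $u$, we may take it to be exactly $u^{r-1}$. For the anti-Hopf fibration (corresponding to $\pi_1$), I would compare with the Hopf one via complex conjugation $F : S^{2r-3} \to S^{2r-3}$. This $F$ intertwines the two $S^1$-actions and so induces a homotopy equivalence of Borel constructions covering the identity on $BS^1$; by naturality the anti-Hopf transgression is $(\deg F) \cdot u^{r-1} = (-1)^{r-1} u^{r-1}$. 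Since $r$ is odd, this is $u^{r-1}$, the same as in the Hopf case.

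Combining the two cases through the K\"unneth decomposition yields $''d_{2r-2}\bigl(a\rho_{2r-3},\, b\rho_{2r-3}\bigr) = (a+b)\,u^{r-1}$, from which surjectivity and the description of the kernel as $\spn\{(\rho_{2r-3},-\rho_{2r-3})\}$ are immediate.

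The main obstacle is keeping the signs straight: one must check that complex conjugation really does have degree $(-1)^{r-1}$ on the unit sphere in $\C^{r-1}$ (it is the product of $r-1$ reflections, one per complex coordinate), and that this sign is the one relating the two transgressions through naturality. Once the parity calculation is done, the conclusion that both signs agree when $r$ is odd is the key point that makes the differential have the advertised kernel; had we been in the even case, the two transgressions would have opposite signs and the kernel would instead be the diagonal.
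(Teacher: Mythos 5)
Your proof is correct, and it reaches the same conclusion as the paper by reorganizing essentially the same two ingredients: comparison to the Hopf fibration $S^{2r-3}\to\CP^{r-2}\to BS^1$ for surjectivity, and the fact that coordinatewise complex conjugation on $S^{2r-3}\subset\C^{r-1}$ has degree $(-1)^{r-1}$, which is $+1$ because $r$ is odd. The paper packages the conjugation-degree fact into a single $S^1$-equivariant involution $\sigma\colon (b,c)\mapsto(\bar c,\bar b)$ covering the identity on $BS^1$, observes that $\sigma$ is homotopic to the factor swap when $r$ is odd, and deduces that ${}''d_{2r-2}$ is symmetric in its two K\"unneth arguments; together with surjectivity this forces the kernel to be $\spn\{(\rho_{2r-3},-\rho_{2r-3})\}$. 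You instead split the transgression along the two coordinate projections $\pi_1,\pi_2$ (equivariant for the anti-Hopf and Hopf actions respectively), pin down the Hopf transgression from the cohomology of $\CP^{r-2}$, and transfer that answer to the anti-Hopf side using conjugation as an equivariant map over the identity on $BS^1$. The outcome is the explicit formula ${}''d_{2r-2}(a\rho,b\rho)=\bigl((-1)^{r-1}a+b\bigr)u^{r-1}$, which is a bit more than the paper records (the paper only needs symmetry plus surjectivity), and your closing remark correctly identifies that the kernel flips to the diagonal when $r$ is even, which is indeed why Section~\ref{sec:rEven} handles even $r$ by a separate comparison rather than by this lemma. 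Both arguments are sound; yours is slightly more computational and requires tracking the two normalizations, while the paper's avoids that by proving only the symmetry it needs.
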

\begin{proof}
  Comparison to the Serre spectral sequence associated to $S^{2r-3} \to \CP^{r-2} \to B S^1$ via either projection shows
  that the differential in question is surjective. For the second part, it suffices to show that
  $''d_{2r-2}(\beta,\gamma) = {''d_{2r-2}}(\gamma,\beta)$ for classes $\beta, \gamma \in \Hoh^{2r-3}(S^{2r-3})$. The
  product $S^{2r-3} \times S^{2r-3}$ is equipped with an $S^1$-equivariant involution
  $\sigma: (b,c) \mapsto (\bar{c},\bar{b})$, where $\bar{a}$ denotes the $(r-1)$-tuple
  $(\bar{a}_1, \dots, \bar{a}_{r-1})$. The map $\sigma$ induces a self-map of spectral sequences
  $\sigma_k^{p,q}: {''\Eoh_k^{p,q}} \to {''\Eoh_k^{p,q}}$. In particular, the differential $''d_{2r-2}$ is invariant
  under the action of $\sigma$. Note that the homeomorphism $S^{2r-3} \to S^{2r-3}$ defined by $a \mapsto \bar{a}$ has
  degree 1, being the composition of an even number of reflections. Hence $\sigma$ is homotopic to the
  self-homeomorphism of $S^{2r-3} \times S^{2r-3}$ that switches factors. The induced map $\sigma^*$ on rational
  cohomology in degree $2r-3$ also switches factors. The result follows.
\end{proof}
Next we discuss the induced map $i^*: \Hoh^*(U(r)) \to \Hoh^*(S^{2r-3} \times S^{2r-3})$. 
\begin{lemma} \label{lem:inonzero}
	The class $i^*(\alpha_{2r-3})$ is nonzero. 
\end{lemma}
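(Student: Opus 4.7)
The approach is to detect $i^{*}(\alpha_{2r-3})$ via a Thom--Gysin / linking-number argument. From the proof of Proposition \ref{pr:cohoU}, the class $\alpha_{2r-3}$ is characterised by the fact that the Thom--Gysin connecting homomorphism
\[
  \partial : \widetilde{\Hoh}^{2r-3}(U(r)) \xrightarrow{\iso} \Hoh^{0}(W(r)) \iso \QQ
\]
sends it to a generator; the isomorphism holds because $\rM^r \sm T(r)$ is $(4r-6)$-connected, so both $\widetilde{\Hoh}^{2r-3}$ and $\widetilde{\Hoh}^{2r-2}$ of it vanish. By naturality of $\partial$, for any map $j : S^{2r-3} \to U(r)$ which extends to a map $\tilde j : D^{2r-2} \to \rM^r \sm T(r)$ meeting $W(r)$ transversely, the pullback $j^{*}(\alpha_{2r-3}) \in \Hoh^{2r-3}(S^{2r-3})$ equals the signed intersection number of $\tilde j$ with $W(r)$. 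So it suffices to exhibit a single slice of $i$ whose capped-off disc meets $W(r)$ in a single transverse point.

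Concretely, fix $c_{0} \in S^{2r-3}$, restrict $i$ to $S^{2r-3} \times \{c_{0}\}$, and extend this restriction to
\[
  \tilde j(b) = \left( \begin{bmatrix} 0 & b_{1} \\ c_{0,1} & 0 \end{bmatrix}, \dots, \begin{bmatrix} 0 & b_{r-1} \\ c_{0,r-1} & 0 \end{bmatrix}, \begin{bmatrix} 1 & 0 \\ 0 & -1 \end{bmatrix} \right),
\]
where $b$ now ranges over the unit disc $D^{2r-2} \subset \C^{r-1}$. For $b \neq 0$ the argument used to define $i$ shows $\tilde j(b) \in U(r)$; at $b = 0$ the first $r-1$ matrices are scalar multiples of a common rank-one matrix (hence pairwise commute) but do not commute with the last matrix. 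Therefore $\tilde j(0) \in W(r)$, the map $\tilde j$ avoids $T(r)$ entirely, and $W(r)$ is met in the single point $b = 0$.

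The one step that requires real work, and which I expect to be the main obstacle, is the verification of transversality at $p = \tilde j(0)$. The plan is a local linearisation. Since the last matrix of $p$ has simple eigenvalues, on a neighbourhood of $p$ the variety $Z(r)$ is cut out by $r-1$ complex equations, each expressing that one of the first $r-1$ matrices preserves the eigenline of the last matrix close to $e_{2}$. A direct linearisation of these equations at $p$ identifies $T_{p} W(r) \subset T_{p}\rM^r$ as the subspace on which the $(1,2)$-entries of the first $r-1$ matrix perturbations vanish; meanwhile the image of $d\tilde j_{0}$ consists precisely of perturbations that vary only those $(1,2)$-entries. The two subspaces are therefore complementary, giving transversality and local intersection number $\pm 1$. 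Naturality of the Thom--Gysin connecting map then yields $(i|_{S^{2r-3} \times \{c_{0}\}})^{*}(\alpha_{2r-3}) = \pm 1 \in \Hoh^{2r-3}(S^{2r-3})$, so in particular $i^{*}(\alpha_{2r-3}) \neq 0$.
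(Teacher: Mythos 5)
Your proof is correct and follows essentially the same strategy as the paper's: both cap off the sphere $i|_{S^{2r-3}\times\{c_0\}}$ by a contractible slice in $\rM^r\sm T(r)$ (the paper uses $\C^{r-1}$ with lower-left entries fixed to $1$, you use a disc with lower-left entries fixed to $c_0$), check transversality with $W(r)$ at the single intersection point, and invoke naturality of the Thom--Gysin sequence. The paper phrases the transversality check via the common-eigenspace map $p\colon W(r)\to\CP^1$ and differentiating a path, while you linearize the local defining equations of $W(r)$ directly, but these are the same local computation identifying $T_pW(r)$ with the subspace where the $(1,2)$-entry perturbations of the first $r-1$ matrices vanish.
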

\begin{proof}
	Consider the inclusion $j: \C^{r-1} \mapsto \rM^r \sm T(r)$ given by
	\[
		(b_1, \dots, b_{r-1}) \mapsto 
		\left(
		\begin{bmatrix}
			0 & b_1 \\
			1 & 0
		\end{bmatrix}, \dots , 
		\begin{bmatrix}
			0 & b_{r-1} \\
			1 & 0
		\end{bmatrix},
		\begin{bmatrix}
			1 & 0 \\
			0 & -1
		\end{bmatrix}
		\right).
	\]
	There is a pullback square
	\[
		\begin{tikzcd}
			\{\vec{0} \} \rar \dar \ar[dr,phantom,"\lrcorner",very near start] & \C^{r-1} \dar["j"] \\
			W(r) \rar & \rM^r \sm T(r)
		\end{tikzcd}
	\]
	where the two horizontal maps are closed inclusions of smooth $\C$-varieties. The map $j$ induces a map of Thom--Gysin sequences provided $W(r)$ is transverse to $j$. 
	
	To see that the intersection is transverse, let $\epsilon > 0$ and $\alpha: (-\epsilon, \epsilon) \to W(r)$ be a smooth path such that $\alpha(0) = j(\vec{0})$. We may write 
	\[
		\alpha(t) = 
		\left(
		\begin{bmatrix}
			a_1(t) & b_1(t) \\
			1 + c_1(t) & d_1(t)
		\end{bmatrix}, \dots, 
		\begin{bmatrix}
			a_{r-1}(t) & b_{r-1}(t) \\
			1 + c_{r-1}(t) & d_{r-1}(t) 
		\end{bmatrix},
		\begin{bmatrix}
			a_r(t) + 1 & b_r(t) \\
			c_r(t) & d_r (t) -1
		\end{bmatrix}
		\right)
	\]
	for some smooth paths $a_k, b_k, c_k, d_k: (-\epsilon, \epsilon) \to \C$ that evaluate to $0$ at $t=0$. We wish
        to show that $b_k'(0) = 0$ for each $k = 1, \dots, r-1$. Recall there is a map $p: W(r) \to \CP^1$ that sends an $r$-tuple to
        its common eigenspace. Since $p\alpha(0) = [0:1]$, we may assume that the image of $\alpha$ lies in
        $p^{-1}(U_1)$ where $U_1 = \{[z:1]: z \in \C\} \homeo \C$. Write $p\alpha(t) = [z(t):1]$, and let $k \in \{1, \dots, r-1\}$. If the $k$th matrix of
        $\alpha(t)$ has the eigenspace $[z(t):1]$, then
	\[
		a_k(t)z(t) + b_k(t) = (1 + c_k(t)) z(t)^2 + d_k(t) z(t).
	\]
        Taking derivatives with respect to $t$ and evaluating at $t=0$, we get $b_k'(0) = 0$.
	
	A portion of the induced map of Thom--Gysin sequences takes the form:
	{\small\[
		\begin{tikzcd}
		\cdots \rar& 	0 \rar & \widetilde \Hoh^{2r-3}(\C^{r-1} \sm \vec{0}) \rar{\iso} & \Hoh^0(\{\vec{0}\}) \rar & 0 \rar & \cdots\\
			\cdots \rar & \widetilde\Hoh^{2r-3}(\rM^r \sm T(r)) \rar \uar["j^*"] & \widetilde \Hoh^{2r-3}(U(r)) \rar["\partial"]
                        \uar["j^*"] &  \Hoh^0(W(r)) \rar \uar["j^*"] &  \widetilde \Hoh^{2r-2}(\rM^r \sm T(r)) \uar["j^*"] \rar & \cdots .
		\end{tikzcd}
	\]}
	Since $\rM^r \sm T(r)$ is $(4r-6)$-connected by Corollary \ref{cor:connect} and Proposition \ref{pr:dimT}, the
        map $\partial$ is an isomorphism. It follows that $j^*: \Hoh^{2r-3}(U(r)) \to \Hoh^{2r-3}(\C^{r-1} \sm \vec{0})$
        is an isomorphism. The map $j$ restricted to $\C^{r-1} \sm \vec{0}$ is, up to homotopy, the restriction of the
        map $i$ to one of the spherical factors. It follows that $i^*$ is nontrivial in degree $2r-3$. 
\end{proof}
\begin{lemma}\label{lem:symOfi*}
  There exists a nonzero element $\beta$ of $\Hoh^{2r-3}(S^{2r-3})$ such that $i^*(\alpha_{2r-3}) =(\beta,\beta)$.
\end{lemma}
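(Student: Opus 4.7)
The plan is to exploit a symmetry that exchanges the two spherical factors on one side and becomes trivial on the other. By Lemma \ref{lem:inonzero} we know $i^*(\alpha_{2r-3}) \ne 0$, so under the K\"unneth identification we may write $i^*(\alpha_{2r-3}) = (\beta_1, \beta_2)$ with at least one summand nonzero. The whole content of the lemma is then that $\beta_1 = \beta_2$, after which nonvanishing of $\beta := \beta_1$ follows from Lemma \ref{lem:inonzero}.

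The key players are two involutions. On the source, let $\sigma : S^{2r-3} \times S^{2r-3} \to S^{2r-3} \times S^{2r-3}$ be the factor swap $(b,c) \mapsto (c,b)$; by K\"unneth, $\sigma^*(\beta_1,\beta_2) = (\beta_2,\beta_1)$. On the target, let $P = \bigl[\begin{smallmatrix}0&1\\1&0\end{smallmatrix}\bigr]$ and let $\phi : U(r) \to U(r)$ be conjugation of each matrix by $P$. Because $\PGL(2)$ is path-connected, any path from the identity to $[P]$ in $\PGL(2)$ gives a homotopy from $\id_{U(r)}$ to $\phi$, so $\phi^* = \id$ on $\Hoh^*(U(r))$.

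A direct computation gives $i\circ\sigma(b,c) = (M_1,\ldots,M_{r-1},D)$ and $\phi\circ i(b,c) = (M_1,\ldots,M_{r-1},-D)$, where $M_k = \bigl[\begin{smallmatrix}0&c_k\\b_k&0\end{smallmatrix}\bigr]$ and $D = \bigl[\begin{smallmatrix}1&0\\0&-1\end{smallmatrix}\bigr]$. Thus $i\sigma$ and $\phi i$ differ only in the sign of the last matrix, and I would interpolate between them by the homotopy
\[
H_t(b,c) = \left(M_1,\ldots,M_{r-1},\,e^{i\pi t} D\right), \qquad t \in [0,1].
\]
Granted that $H_t$ lands in $U(r)$ throughout, applying $H^*$ to $\alpha_{2r-3}$ yields $(\beta_2,\beta_1) = (i\sigma)^*(\alpha_{2r-3}) = (\phi i)^*(\alpha_{2r-3}) = i^*\phi^*(\alpha_{2r-3}) = (\beta_1,\beta_2)$, so $\beta_1 = \beta_2$.

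The main obstacle is verifying that $H_t$ really does map into $U(r)$ for every $t$. For each $t$ the matrix $e^{i\pi t}D$ is nonzero and diagonal with distinct eigenvalues, so its only eigenlines are $\spn\{e_1\}$ and $\spn\{e_2\}$. For the tuple $H_t(b,c)$ to have a common eigenvector, one of these two lines would have to be preserved by every $M_k$. But $M_k$ preserves $\spn\{e_1\}$ iff $b_k = 0$ and preserves $\spn\{e_2\}$ iff $c_k = 0$; since $b, c \in S^{2r-3}$ each have some nonzero coordinate, neither line is common, and $H_t(b,c) \in U(r)$. This is the only nontrivial point in the argument, and it is precisely where the hypothesis that we work on products of spheres (rather than products of affine spaces) is used.
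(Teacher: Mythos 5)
Your proof is correct and follows essentially the same strategy as the paper's: both exploit a self-map of $U(r)$ arising from the connected group acting on it (hence homotopic to $\id_{U(r)}$) that intertwines $i$ with the factor swap. The only difference is cosmetic --- the paper uses the $\C^\times \times \PGL(2)$-action and multiplies the conjugation by the global scalar $-1$ so that the square commutes strictly, whereas you use the bare conjugation and repair the resulting sign on the last matrix with an explicit homotopy, which you correctly verify stays in $U(r)$.
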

\begin{proof}
  Set $i^*(\alpha_{2r-3}) = (\beta, \gamma)$. In light of Lemma \ref{lem:inonzero}, it suffices to show
  $\beta = \gamma$.

  The space $U(r)$ admits an action by the connected Lie group $\C^\times \times \PGL(2)$, given by
  \[ (\lambda, P) \cdot (A_1, \dots, A_r) = (\lambda PA_1P^{-1}, \dots ,\lambda PA_rP^{-1}).\] Let
  $\tau : U(r) \to U(r)$ denote the action by $\left(-1, \begin{bmatrix} 0 & -1 \\ 1
      &0  \end{bmatrix}\right)$. Connectivity implies that $\tau \weq \id_{U(r)}$.
  
  Explicitly $\tau$ acts in the following way on each entry of $\vec A$:
  \[
    \begin{bmatrix}
        a & b \\
        c & d
      \end{bmatrix} \mapsto
    \begin{bmatrix}
      -d & c \\
      b & -a
    \end{bmatrix},
  \]
  and we observe that the diagram
  \[
    \begin{tikzcd}
      S^{2r-3} \times S^{2r-3} \dar["i"] \rar["\text{swap}"] & S^{2r-3} \times S^{2r-3} \dar["i"] \\
      U(r) \rar["\tau"] & U(r)
    \end{tikzcd}
  \]
  commutes. The result follows.
\end{proof}
\begin{proposition}\label{pr:diffrOdd}
	When $r>2$ is odd, the class $d_{2r-2}(\alpha_{2r-3})$ is a generator of $\Hoh^{2r-2}(B \PGL(2))$.
\end{proposition}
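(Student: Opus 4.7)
The strategy is to chain naturality of the Serre spectral sequences associated to the two diagrams \eqref{eq:1stComp} and \eqref{eq:2ndComp}, transferring the nonvanishing of a transgression up through them. Since $r$ is odd, $2r - 2$ is a multiple of $4$, and $\Hoh^{2r-2}(B\PGL(2;\C);\Q)$ is one-dimensional, spanned by $p_1^{(r-1)/2}$; it therefore suffices to show $d_{2r-2}(\alpha_{2r-3}) \neq 0$.

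First I will use \eqref{eq:2ndComp} to prove $'d_{2r-2}(\alpha_{2r-3}) \neq 0$. Lemma \ref{lem:symOfi*} gives $i^*(\alpha_{2r-3}) = (\beta,\beta)$ with $\beta \neq 0$, while Lemma \ref{lem:''d_{2r-2}} identifies the kernel of $''d_{2r-2}$ as $\spn\{(\rho_{2r-3}, -\rho_{2r-3})\}$, which cannot contain $(\beta, \beta)$ for $\beta \neq 0$. Hence $''d_{2r-2}(i^*(\alpha_{2r-3}))$ is nonzero in $\Hoh^{2r-2}(BS^1;\Q)$. The right vertical arrow of \eqref{eq:2ndComp} is the identity on $BS^1$, so naturality of the Serre spectral sequence identifies $''d_{2r-2}(i^*(\alpha_{2r-3}))$ with $'d_{2r-2}(\alpha_{2r-3})$, and the latter is therefore nonzero.

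To finish, I will apply naturality to \eqref{eq:1stComp}, whose left vertical arrow is the identity on $U(r)$ and whose right vertical arrow is the classifying map of the maximal compact torus inclusion $S^1 \hookrightarrow \PGL(2;\C)$. By naturality, the image of $d_{2r-2}(\alpha_{2r-3})$ under the induced map $\Hoh^{2r-2}(B\PGL(2;\C);\Q) \to \Hoh^{2r-2}(BS^1;\Q)$ is $'d_{2r-2}(\alpha_{2r-3})$, which we have just seen is nonzero. Therefore $d_{2r-2}(\alpha_{2r-3})$ is nonzero, hence a generator of the one-dimensional space $\Hoh^{2r-2}(B\PGL(2;\C);\Q)$. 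The main obstacle in the argument has been dispatched in Lemmas \ref{lem:''d_{2r-2}}, \ref{lem:inonzero}, and \ref{lem:symOfi*}; what remains is naturality, together with the mild bookkeeping, permitted by Proposition \ref{pr:cohoU}, that no earlier differentials disturb the sources or targets of the relevant transgressions in either spectral sequence.
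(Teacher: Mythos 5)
Your argument is correct and follows the paper's own proof essentially verbatim: both use naturality for the second comparison to get $'d_{2r-2} = {''d_{2r-2}}\,i^*$, conclude nonvanishing from Lemmas \ref{lem:''d_{2r-2}} and \ref{lem:symOfi*} since $(\beta,\beta)$ avoids the kernel $\spn\{(\rho_{2r-3},-\rho_{2r-3})\}$, and then transfer the conclusion back through the first comparison. Your version merely spells out the naturality and bookkeeping steps in more detail.
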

\begin{proof}
	Naturality of the Serre spectral sequence for the second comparison says that $'d_{2r-2} = {''d_{2r-2}}i^*$. The latter map is an isomorphism by Lemmas \ref{lem:''d_{2r-2}} and \ref{lem:symOfi*}. The first comparison then shows that $d_{2r-2}$ is an isomorphism. 
\end{proof}
\begin{proposition}\label{pr:cohoBOdd}
	If $r > 2$ is odd, 
	\[
		\Hoh^i(B(r)) \cong 
		\begin{cases}
			\Q 	& \text{if } i \leq 2r-6 \text{ and } i \equiv 0 \pmod{4}; \\
			\Q 	& \text{if } 2r-1 \leq i < 4r-6 \text{ and } i \equiv 1 \pmod{4}; \\
			0 	& \text{otherwise when } i < 4r-6.
		\end{cases}
	\]
\end{proposition}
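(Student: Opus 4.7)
The plan is to read off $\Hoh^i(B(r))$ in the range $i < 4r-6$ directly from the Serre spectral sequence depicted in Figure \ref{fig:E2r-2SSSB(r)}, using Proposition \ref{pr:diffrOdd} as the key input. Combining Proposition \ref{pr:cohoU} with $\Hoh^*(B\PGL(2;\C); \Q) = \Q[p_1]$, the $E_2$-page in this range is supported on three horizontal lines $q = 0$, $q = 2r-3$, and $q = 2r-1$, each a free $\Q[p_1]$-module of rank one generated respectively by $1$, $\alpha_{2r-3}$, and $\alpha_{2r-1}$. Because we work over $\Q$, there will be no extension problems.

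First I would verify that $d_{2r-2}: E^{p, 2r-3} \to E^{p+2r-2, 0}$ is the only differential affecting the range. The only other candidates between the three supported rows are $d_3: E^{p,2r-1} \to E^{p+3, 2r-3}$ and $d_{2r}: E^{p, 2r-1} \to E^{p+2r, 0}$; each of these shifts the $p$-coordinate by an amount not divisible by $4$, and since $r$ is odd and $\Q[p_1]$ is concentrated in degrees divisible by $4$, their targets vanish. A parallel bidegree check rules out any differential connecting our range to rows $q \geq 4r-6$, where $\Hoh^q(U(r))$ is not controlled by Proposition \ref{pr:cohoU}: in every such case the source or target is forced to have a negative column index.

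Next I would apply Proposition \ref{pr:diffrOdd}, which says $d_{2r-2}(\alpha_{2r-3})$ is a nonzero scalar multiple of $p_1^{(r-1)/2}$, together with multiplicativity of the spectral sequence to conclude that $d_{2r-2}(\alpha_{2r-3}\cdot p_1^k) = c\, p_1^{(r-1)/2 + k}$ with $c \neq 0$ for all $k \geq 0$. Hence $d_{2r-2}$ is injective on row $q = 2r-3$ in the range, and its image on row $q = 0$ is exactly the span of $\{p_1^m : m \geq (r-1)/2\}$. Passing to $E_\infty$, what survives in the range is: on row $q = 0$, the classes $p_1^k$ for $0 \leq k \leq (r-3)/2$, contributing $\Q$ in each degree $i \leq 2r-6$ with $i \equiv 0 \pmod 4$; and on row $q = 2r-1$, the classes $\alpha_{2r-1}\cdot p_1^k$ for $k \geq 0$ with $2r-1+4k < 4r-6$, contributing $\Q$ in each degree $i$ satisfying $2r-1 \leq i < 4r-6$ and $i \equiv 1 \pmod 4$ (using that $2r-1 \equiv 1 \pmod 4$ for odd $r$). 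The proposition will follow.

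The main obstacle, namely the nonvanishing of $d_{2r-2}$, has already been handled by Proposition \ref{pr:diffrOdd}; the remaining work is essentially a bookkeeping exercise with bidegrees and residues modulo $4$.
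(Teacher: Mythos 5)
Your proposal is correct and follows exactly the paper's argument: the paper likewise reads the answer off the Serre spectral sequence of $U(r)\to B(r)\to B\PGL(2)$, using Proposition \ref{pr:diffrOdd} and multiplicativity to kill row $q=2r-3$ together with $p_1^m$ for $m\ge (r-1)/2$, and dismisses all other differentials in the range $p+q<4r-6$ by the same bidegree and mod-$4$ considerations. Your write-up just makes explicit the bookkeeping that the paper leaves as ``the result follows immediately.''
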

\begin{proof}
  In the range $p+q < 4r-6$, one has $\Eoh_{2r-1}^{p,q} \cong \Eoh_\infty^{p,q}$. The result follows immediately.
\end{proof}

\section{The case of even \texorpdfstring{$r$}{r} and the proof of Theorem \ref{th:fakeMainTheorem}}\label{sec:rEven}

\begin{proposition}\label{pr:cohoBEven}
	If $r \geq 2$ is even, 
	\[
		\Hoh^i(B(r)) \cong
		\begin{cases}
			\Q	& \text{if } i \leq 2r-4 \text{ and } i \equiv 0 \pmod{4};\\
			\Q 	& \text{if } 2r-3 \leq i < 4r-6 \text{ and } i \equiv 1 \pmod{4}; \\
			0	& \text{otherwise when } i < 4r-6.
		\end{cases}
	\]
\end{proposition}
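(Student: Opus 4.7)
The plan is to mirror the odd-case argument, working with the Serre spectral sequence
\[
\Eoh_2^{p,q} = \Hoh^p(B\PGL(2);\Hoh^q(U(r))) \Rightarrow \Hoh^{p+q}(B(r))
\]
in the range $p+q < 4r-6$. For even $r$, the transgression $d_{2r-2}(\alpha_{2r-3})$ vanishes automatically because $\Hoh^{2r-2}(B\PGL(2);\Q) = 0$ (since $2r-2 \equiv 2 \pmod 4$), so the content of the proof is showing that $d_{2r}(\alpha_{2r-1}) \in \Hoh^{2r}(B\PGL(2);\Q) = \Q\cdot p_1^{r/2}$ is nonzero.

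Rather than constructing a new comparison space as in the odd case, I will import this non-vanishing from the odd case already established for $r+1$. Consider the $\PGL(2)$-equivariant map
\[
\iota \colon U(r) \to U(r+1), \qquad (A_1,\dots,A_r) \mapsto (A_1,\dots,A_r,0).
\]
Since the zero matrix admits every nonzero vector as an eigenvector, the common eigenvectors of $(A_1,\dots,A_r,0)$ coincide with those of $(A_1,\dots,A_r)$, so by Burnside's theorem $\iota$ lands in $U(r+1)$. Equivariance is immediate, so $\iota$ is a morphism of principal $\PGL(2)$-bundles and descends to a map $\iota_B \colon B(r) \to B(r+1)$ over the identity of $B\PGL(2)$.

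Naturality of the Serre spectral sequence for this comparison yields a map of spectral sequences from the one for $B(r+1)$ to the one for $B(r)$, acting as $\iota_U^*$ on bidegree $(0,2r-1)$ and as the identity on the base factor $\Hoh^*(B\PGL(2);\Q)$. In the $B(r+1)$-spectral sequence the class $\alpha_{2r-1}^{(r+1)}$ is precisely the lower $\alpha$-class $\alpha_{2(r+1)-3}^{(r+1)}$, and Proposition \ref{pr:diffrOdd} applied to the odd integer $r+1$ gives $d_{2r}\bigl(\alpha_{2r-1}^{(r+1)}\bigr) \ne 0$. Naturality then forces
\[
d_{2r}\bigl(\iota_U^* \alpha_{2r-1}^{(r+1)}\bigr) = d_{2r}\bigl(\alpha_{2r-1}^{(r+1)}\bigr) \neq 0
\]
in the $B(r)$-spectral sequence. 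Since $\Hoh^{2r-1}(U(r);\Q) \cong \Q$, this forces $\iota_U^*\alpha_{2r-1}^{(r+1)}$ to be a nonzero scalar multiple of $\alpha_{2r-1}^{(r)}$, and we conclude that $d_{2r}(\alpha_{2r-1}^{(r)}) \neq 0$ as well. The main conceptual hurdle is recognizing that the odd-$r+1$ computation transports back along $\iota$; everything else is spectral-sequence formalism.

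With this differential pinned down, reading off $\Eoh_\infty$ in the range $p+q < 4r-6$ yields the two families in the statement: the powers $p_1^k$ with $k < r/2$ survive (degrees $\equiv 0 \pmod 4$, up to $2r-4$), and the classes $\alpha_{2r-3}\cdot p_1^k$ with $2r-3 + 4k < 4r-6$ survive (degrees $\equiv 1 \pmod 4$, from $2r-3$ up to just below $4r-6$); everything in row $2r-1$ is annihilated by $d_{2r}$, and so are the powers $p_1^{r/2+k}$ in range. The case $r=2$ is already contained in Proposition \ref{pr:B2type}.
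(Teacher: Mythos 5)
Your proof is correct and takes essentially the same approach as the paper: both observe that the first possibly nonzero differential for even $r$ is $d_{2r}$ on $\alpha_{2r-1}$, and both import the required non-vanishing from the odd case via the $\PGL(2)$-equivariant stabilization map $U(r)\to U(r+1)$ appending the zero matrix, using naturality of the Serre spectral sequence over the common base $B\PGL(2)$. Your write-up just spells out more of the bookkeeping (that $\iota_U^*\alpha_{2r-1}^{(r+1)}$ must be a nonzero multiple of $\alpha_{2r-1}^{(r)}$, and the final read-off of $E_\infty$) that the paper leaves implicit.
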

\begin{proof}
	The first possibly supported differential in this case is the transgressive differential \[d_{2r}: \Hoh^{2r-1}(U(r)) \to \Hoh^{2r}(B \PGL(2)).\] There is a $\PGL(2)$-equivariant inclusion $U(r) \to U(r+1)$ given by
	\[
		(A_1, \dots, A_r) \mapsto (A_1, \dots, A_r,0).
	\]
	Comparison to the Serre spectral sequence associated to $U(r+1) \to B(r+1) \to B \PGL(2)$ together with Proposition \ref{pr:diffrOdd} shows that $d_{2r}(\alpha_{2r-1})$ is generator of $\Hoh^{2r}(B \PGL(2))$. Similar to the odd case, one has $\Eoh_{2r+1}^{p,q} \cong \Eoh_\infty^{p,q}$ in the range $p+q < 4r-6$. 
\end{proof}

\begin{proof}[Proof of Theorem \ref{th:fakeMainTheorem}]
  Set $i$ to be the largest integer such that $i \equiv 0 \pmod 4$ and $i \le d$, which is to say $i = 4 \lfloor d/4
  \rfloor$. Set $r =i/2 + 1$ so that
  \[ r = 2\left\lfloor \frac{d}{4} \right\rfloor + 1. \]
  We observe that  
  \[
    \begin{tikzcd}
      \Hoh^i(B(r) ; \ZZ) \arrow[r] & \Hoh^i(B \PGL(2) ; \ZZ)
    \end{tikzcd}
  \]
  is not injective, by noting that $i/2$ is even and so $r$ is odd, then using Proposition \ref{pr:cohoBOdd} with
  $i=2r-2$. Therefore \cite[Theorem 11.4]{First2022} applies with $\rM = \Mat_{2 \times 2}(\C)$ as the underlying algebra (note that our $\Hoh^i(B(r); \ZZ)$ is
  $\Hoh^i_G(U_r)$ in \cite[Theorem 11.4]{First2022}). This assures us that there exists a finite type $\CC$-ring $R'$
  having Krull dimension $i$ and a degree-$2$ Azumaya algebra algebra $A'$ over $R'$ such that $A'$ cannot be generated by $r$ elements.

  By construction, $i\le d < i+4$. If we replace $R'$ by a polynomial ring $R$ in $d-i$ variables $\{x_j\}_{j=1}^{d-i}$
  over $R'$, we obtain a ring having Krull dimension $d$ and the modified algebra $A = A' \tensor_{R'}R$ similarly
  cannot be generated by $r$ elements, or else the specialization of those generators at $x_j=0$ would serve as
  generators for $A'$.

  Therefore we have produced a degree-$2$ Azumaya algebra $A$ over a ring $R$ of Krull dimension $d$ such that $A$ cannot be
  generated by fewer than
  \[ 2 \left \lfloor \frac d 4 \right \rfloor + 2\]
  elements.  
\end{proof}

\printbibliography

 \end{document}
